\title{Hydrodynamics Formulation for Deep Learning and Lie--Poisson Hamilton--Jacobi Theory}
\author{Nader T. Ganaba \thanks{\href{mailto:nganaba@outlook.com}{\texttt{nganaba@outlook.com}}}}
\newtheorem{theorem}{Theorem}[section]
\newtheorem{corollary}[theorem]{Corollary}
\newtheorem{proposition}[theorem]{Proposition}
\newtheorem{definition}[theorem]{Definition}
\newtheorem{problem}[theorem]{Problem}
\newtheorem{remark}[theorem]{Remark}
\def\contrac{\mathbin{\hbox to 6pt{%
                 \vrule height0.4pt width5pt depth0pt
                 \kern-.4pt
                 \vrule height6pt width0.4pt depth0pt\hss}}}
\begin{document}

\maketitle

\begin{abstract}
The interpretation of deep learning as a dynamical system has gained a considerable attention in recent years as it provides a promising framework. It allows for the use of existing ideas from established fields of mathematics for studying deep neural networks. In this article we present deep learning as an equivalent hydrodynamics formulation which in fact possess a Lie-Poisson structure and this further allows for using Poisson geometry to study deep learning. This is possible by considering the training of a deep neural network as a stochastic optimal control problem, which is solved using mean--field type control. The optimality conditions for the stochastic optimal control problem yield a system of partial differential equations, which we reduce to a system of equations of quantum hydrodynamics.  We further take the hydrodynamics equivalence to show that, with conditions imposed on the network, that it is equivalent to the nonlinear Schr\"{o}dinger equation.

As the such systems have a particular geometric structure, we also present a structure--preserving numerical scheme based on the Hamilton--Jacobi theory and its Lie--Poisson version known as Lie--Poisson Hamilton--Jacobi theory. The choice of this scheme was decided by the fact it can be applied to both deterministic and mean--field type control Hamiltonians. Not only this method eliminates the possibility of the emergence of fictitious solutions, it also eliminates the need for additional constraints when constructing high order methods. 
\end{abstract}
% \tableofcontents{}

\section{Introduction}
The kinship between deep learning and dynamical systems, discussed thoroughly and lucidly in \cite{weinan2017proposal}, has gained a considerable attention in recent years as it was studied in a number of papers such as \cite{li2017maximum, pmlr-v80-li18b, benning2019deep}. This comes as a consequence of the increased complexity of deep learning and their success in various real world applications, such as performing outstandingly in recognition tasks for images \cite{farabet_learning_2013,szegedy_going_2014, krizhevsky_imagenet_2017}, and speech \cite{hinton_deep_2012-1, seide_conversational_nodate}, automated language translation \cite{10.5555/2969033.2969173}, drug discovery \cite{ma_deep_2015, gawehn_deep_2016, chen_rise_2018} and predicting splicing patterns of RNA and their role in genetic diseases \cite{barash_deciphering_2010, leung2014deep, xiong2015human}. Yet, the inner workings are not well understood and the lack of mathematical framework in which one can analyse the systems. Further, basic tools of linear algebra are not sufficient to explain and analyse the behaviour of complex systems. 
\\

This article is part of a series that view deep learning as a hydrodynamics system within the geometric mechanics framework to better understand the behaviour of deep networks. And here, we introduce a Hamiltonian hydrodynamic system on Poisson manifold which is equivalent to training of deep neural networks. The result is a system of partial differential equations similar to quantum Euler equation, or Madelung equation and this equation can be further transformed into the nonlinear Schr\"{o}dinger equation. As these equations have special properties, we will also present geometric numerical schemes to be used as training algorithms. 
\\

The passage from deep learning to hydrodynamics is not direct and at its core is optimal control interpretation of deep learning.  The use of optimal control method for deep learning is not new, in fact, it is used to train residual neural networks \cite{li2017maximum,weinan2019mean,pmlr-v80-li18b, benning2019deep}. In addition to that, it is a candidate for a framework for studying deep neural networks \cite{Haber_2017, benning2019deep}. Besides training this point of view is important as there is an association between optimal control and geometry \cite{sussmann_geometry_1999, bloch_optimal_1999, gay-balmaz_clebsch_2011} and this offers the opportunity to use methods and theories beyond linear and nonlinear algebra to study deep learning. This association is due in part to Pontryagin's Maximum Principle \cite{doi:10.1002/zamm.19630431023} for solving the optimal control problem, where it gives a Hamiltonian function whose associated vector field generates a flow that solves the training problem. In Hamiltonian mechanics, the solutions can be considered as coordinate transformations between two symplectic manifolds known as \emph{symplectic transformations}, or \emph{symplectomorphisms}. The underlying geometry allows one to make the transition into continuum mechanics \cite{bloch2000optimal, holm_eulers_2009} and such transition is reminiscent of \cite{benamou2000computational} use of fluid dynamics for solving optimal transport problems. Here, we consider a general case when the deep neural network is stochastic as a way to quantify uncertainties. That said, the training problem becomes a stochastic optimal control problem and there are different ways to solve this problem. The approach chosen here is mean--field type control \cite{bensoussan_mean_2013}, which casts the problem as a deterministic problem. What is advantageous is that the conditions for optimality can be reduced to  an equivalent hydrodynamics system, which is indeed a \emph{Lie--Poisson system}. The resultant system of partial differential equations has a structure as the one for ideal compressible fluid. 
\\

Such formulation for deep learning is novel, and it is presented in this article. Not only it helps in applying Hamilton--Jacobi for Poisson integrators, but also provides a framework for studying and analysing deep learning problems. The said Lie--Poisson system for deep learning presented here has the same structure as the ideal compressible fluids; it allows us to use theories of hydrodynamics to study neural networks. The question of stability can help choosing hyperparameters heuristically and methods such as energy--momentum \cite{Simo1991StabilityOR, SIMO199263}, and energy--Casimir both deterministic \cite{HOLM19851} and stochastic \cite{arnaudon_stochastic_2018} can be used to study the stability of Lie--Poisson systems. That said, the study of stability is reserved for a future article. Here the focus is on employing Hamilton--Jacobi theory and its variants for systems with symmetry for deriving structure--preserving integrators for Hamiltonian systems. 
\\

Since these systems are Hamiltonian, one needs to use numerical schemes that are symplectic or Poisson. There are a number of ways one can derive such integrators, such as the partitioned Runge--Kutta method \cite{abia_partitioned_1993}, or variational integrators \cite{marsden2001discrete} and here we focus on a type that uses generating functions. It relies on the \emph{Hamilton--Jacobi theory}, which is an alternative formulation of Hamiltonian mechanics in terms of wavefronts  and instead of solving for a single path, it generates a family of solutions.  At the heart of the Hamilton--Jacobi theory, there is the \emph{Hamilton--Jacobi equation}, as it appears in geometrical optics \cite{caratheodory_grundlagen_1937,1980719}, and the equation for modelling the wave front propagation and this is considered a passage from quantum mechanics to classical mechanics \cite{keller_corrected_1958}.  The solution of the Hamilton--Jacobi equation, known as \emph{generating function}, which from the point of view of symplectic geometry, allows to heuristically construct coordinate transformations that preserve a geometric structure known as \emph{symplectic structure}. This forms a basis for the numerical scheme, as obtaining an approximation of the generating function we can then use it to generate symplectic transformations. In other words, the numerical scheme is a symplectic transformation. 
\\

Here, we limit the application of the Hamilton--Jacobi theory to the construction of a structure--preserving numerical scheme, based on the generating function, by projecting a continuous neural network model onto a finite dimensional space while conserves a number of the network's properties. The second application is more elaborate as it takes into account uncertainties in the network and uses mean--field games interpretation of deep learning and it uses a variant of Hamilton--Jacobi theory known as \emph{Lie--Poisson Hamilton--Jacobi theory} \cite{zhong1988lie}.  As mentioned, the resultant system of partial differential equations has the same Poisson structure as the ideal compressible fluid \cite{marsden_semidirect_1984}, and because it is defined on semidirect product spaces, it necessitates the derivation of Lie--Poisson Hamilton--Jacobi theory of semidirect products spaces.  Alternatively, the mean--field games equations ca be solved using \emph{multisymplectic variational integrators} \cite{marsden1998multisymplectic} and it is the focus of  \cite{G2021}. It is worth pointing out that such integrators were first applied to machine learning problems in \cite{barbaresco2020lie}. The difference between \cite{G2021} and \cite{barbaresco2020lie} is that the latter uses tools from information geometry, geometric statistical mechanics and Lie group thermodynamics for machine learning and data belong to a Lie group. Whereas the former, only the network's weights are elements of the Lie algebra and the multisymplectic partial differential equation arises from mean--field games control. 
\\

Most numerical methods for the class of Lie--Poisson equations for infinite dimensional systems use a discretisation of the vector field on the group of diffeomorphisms and not the group's elements  \cite{mclachlan_explicit_1993, engo_numerical_2001} based on the works of \cite{zeitlin_finite-mode_1991}. However, it is difficult to discretise the group of diffeomorphisms directly, as it is an infinite dimensional group, and apply Lie--Poisson Hamilton--Jacobi theory. One option is to consider the discretisation of \cite{pavlov2011structure} where the group of diffeomorphisms is approximated in finite dimensional space as the Lie group $GL(N)$. Such discretisation comes with its own challenges, such as the introduction of flat operators. For this reason we introduce a discretisation that uses the group of pseudodifferential symbols instead of the group of diffeomorphisms, which is a submanifold of the group of pseudodifferential symbols, and it makes systems on the infinite dimensional space compatible with the Lie--Poisson Hamilton--Jacobi theory. The reward of going to these lengths for numerical integrators is that the scheme preserves important properties such as the coadjoint orbit, meaning that the numerical solution would always belong to the solution set.  Figure \ref{fig:three_spheres} demonstrates visually the solution of the angular momentum of  Hamiltonian system of the $\mathrm{SO}(3)$ rigid body obtained by  three different numerical schemes. For this rigid body, the coadjoint orbit is a sphere. It is evident the explicit Euler and $4^{th}$ order Runge--Kutta method yield solutions that do not belong to the set of solutions, whereas the Lie--Poisson Hamilton--Jacobi scheme yields a solution that remains on the surface of the sphere. 
\\

The three main reasons for using generating functions to derive numerical schemes are high order integrators, possible lack of hyperregularity and its extension to the stochastic case. The first reason concerns the construction of high-order schemes and using generating functions one can construct high-order methods without the need to resort to carefully select coefficients, such as symplectic Runge--Kutta method \cite{abia_partitioned_1993}, or compose a number low order integrators to form a high order one \cite{yoshida_construction_1990}. The second reason is due to the nature of the optimal control Hamiltonian. In Hamiltonian mechanics, the move into the Lagrangian formulation is done by using Legendre transform, however, this transform is not bijective when the Hamiltonian is not hyperregular. One approach to solving Hamiltonian systems numerically is to move into the Lagrangian side and apply variational integrators \cite{marsden2001discrete}. Moreover, the application of structure--preserving integrators for optimal control was first done in the Lagrangian picture \cite{junge_discrete_2005} and further studied in \cite{lee_computational_2008,lee_optimal_2008,bloch_geometric_2009, kobilarov_discrete_2011}. The lack of hyperregularity impedes the move to the Lagrangian side and for this reason a Hamiltonian structure--preserving integrator is needed \cite{leok_discrete_2011}. The third reason is of practicality as our approach can be applied to both deterministic and stochastic optimal control problems, albeit with additional steps.  

\begin{figure}
    \centering
         \begin{subfigure}[b]{0.32\textwidth}
         \centering
         \includegraphics[width=\textwidth]{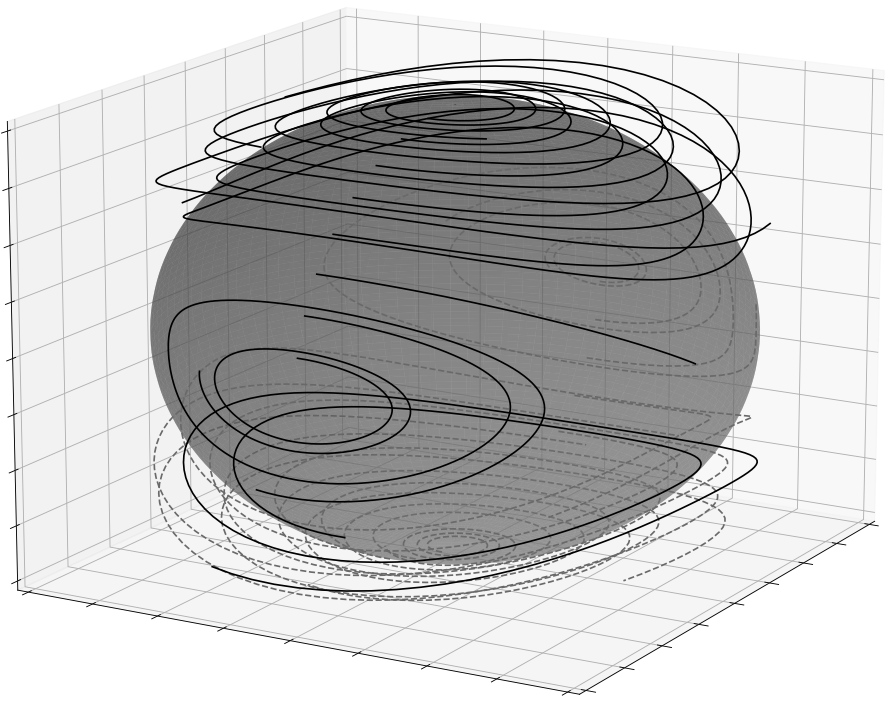}
         \caption{ }
         \label{fig:rigid_euler}
     \end{subfigure}
    %  \hfill
     \begin{subfigure}[b]{0.32\textwidth}
         \centering
         \includegraphics[width=\textwidth]{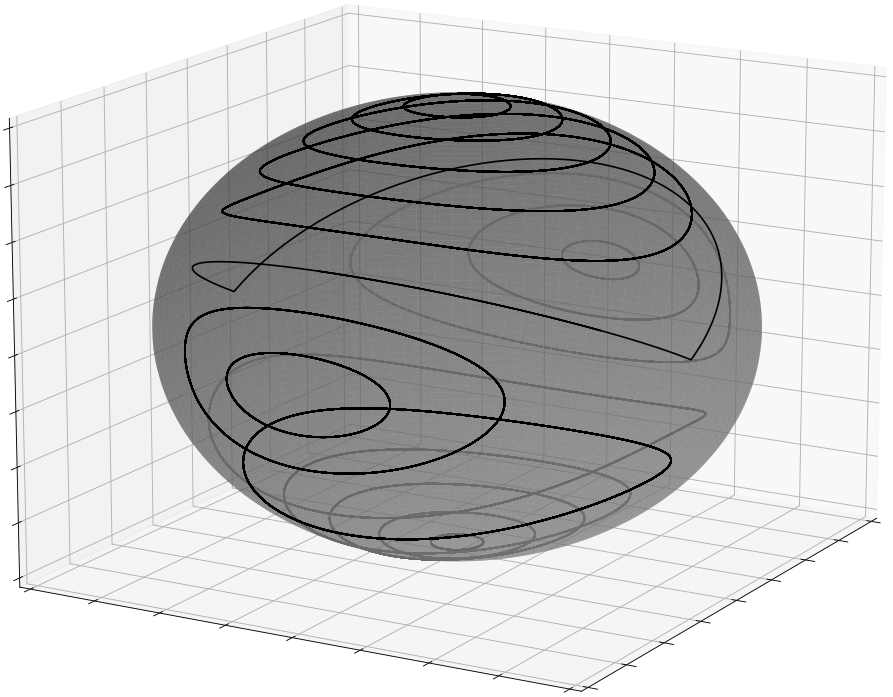}
         \caption{ }
         \label{fig:rigid_runge}
     \end{subfigure}
    \begin{subfigure}[b]{0.32\textwidth}
         \centering
         \includegraphics[width=\textwidth]{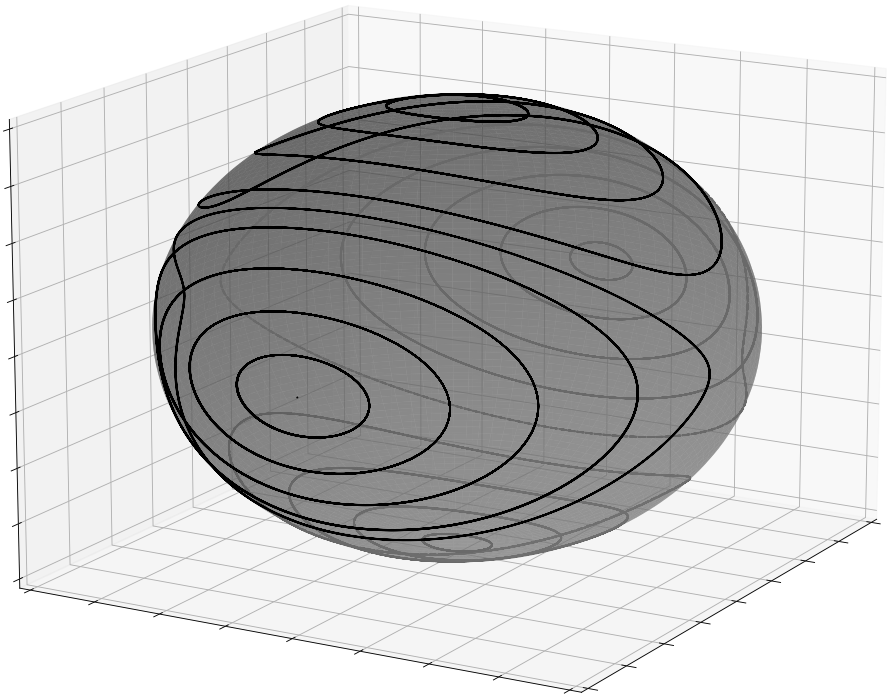}
         \caption{ }
         \label{fig:rigid_lphj}
     \end{subfigure}
        \caption{The solution of $SO(3)$ Rigid body, which is a Lie--Poisson system, computed using three different numerical methods to highlight the importance of structure--preserving integrators. The equation of motion is $\dot{\Pi} = \Pi \times \Omega$, with $\Pi = \mathbb{I}\Omega$ and $\mathbb{I} = \mathrm{diag}(1,2,3)$ is the inertia tensor. In figure \ref{fig:rigid_euler}, the Euler scheme is used to approximate the solution and it is evident that it does not stay on the sphere. The second scheme used is the $4^{th}$ order Runge--Kutta method, shown in figure \ref{fig:rigid_runge}, although it fares better than Euler method, it too does not preserve the structure. The last scheme is structure--preserving scheme that uses Lie--Poisson Hamilton--Jacobi theory of \cite{zhong1988lie}, the method is discussed in section \ref{sec:lphj_theory}, and the results that it yields, shown in figure \ref{fig:rigid_lphj}, are satisfactory. }
        \label{fig:three_spheres}
\end{figure}

\section{Preliminaries}
\subsection{Hamilton--Jacobi Theory}\label{sec:hamilton_jacobi_intro}
As it plays central role for the derivation of structure--preserving schemes, here, we give a brief summary of the Hamilton--Jacobi theory and the important concepts used throughout this article.  One source that we highly recommend is \cite{marsden2013introduction}. We consider the the space of all possible states of the dynamical system and it is called \emph{configuration manifold} $Q$ with coordinates $q = (q^1, \dots, q^n)$, where $n$ is the dimension of $Q$. The tangent space of the configuration bundle is $TQ$ and its dual is the cotangent bundle $T^{\ast}Q$, whose generalised coordinates are $(q,p) = (q^1, \dots, q^n, p^1, \dots, p^n)$.  In mechanics, there are a number of ways to describe the motion of the system and one of these is Newtonian mechanics. Despite the fact that the intuition behind Newtonian mechanics is easier to grasp, when there are physical and geometric constraints, it becomes difficult to model. Lagrangian mechanics was introduced to tackle this and it is a formulation that is widely used for constraint particle motion. Such formulation of mechanics requires $(q, \dot{q}) \in TQ$ to describe the motion of the system. An alternative formulation, and the one used here, is the Hamiltonian formulation which uses position and conjugate momenta  $(q, p) \in T^{\ast}Q$ to describe evolution of the system. Another important object in Hamiltonian mechanics is the Hamiltonian function $H: T^{\ast}Q \to \mathbb{R}$, which carries all the information of the dynamics of the system. The Hamiltonian, $H$, is related to the energy of the system and, in some cases, it coincides with the total energy. Once we have the Hamiltonian, $H$, and with fixed boundary terms $q(0)$ and $q(T)$, the variational principle states that 
\begin{align}
    \delta \int_0^T \left\langle p, \dot{q} \right\rangle - H(q,p) \, dt = 0,
\end{align}
and it yields Hamilton's equation
\begin{align}\label{eq:hamilton_eq}
    \frac{d}{dt} q = \frac{\partial H}{\partial p}(q,p), \quad \text{and} \quad \frac{d}{dt} p = -\frac{\partial H}{\partial q}(q,p).
\end{align}

Often case, Hamilton's equation are described as what is known as \emph{a Hamiltonian vector field} $X_H = (\frac{\partial H}{\partial p}, -\frac{\partial H}{\partial q})$ whose flow is a symplectomorhism, $\Phi_{X_H}: (q_0, p_0) \to (q, p)$ , and it maps the initial points $(q_0, p_0) \in T^{\ast}Q$ to the final points $(q(t), p(t)) \in T^{\ast}Q$ at time $t$. This mapping leaves the symplectic structure, $\omega = dq \wedge dp$ invariant, i.e.
\begin{align}\label{eq:symplectic_form}
\left(\Phi_{X_H} \right)^{\ast} \omega - \omega  = 0,
\end{align}
and this allows for constructing coordinate transformation that considerably simplifies the equation of motion. The mapping $\left(\Phi_{X_H} \right)^{\ast}$ is the pull-back of of $\omega$ by flow of $X_H$ and \eqref{eq:symplectic_form} can be easily proven by taking the time derivative and using Cartan's formula for Lie derivatives \cite{marsden2013introduction}.
\\

In fact, one choice of coordinates $(w, J)$, known as action-angle coordinates, chooses the momentum $J$ to be a constant of motion and this coordinate system is central to the celebrated \emph{Kolmogorov-Arnold-Moser theorem}.  Meanwhile, the symplectic form can be written as an exact form
\begin{align} \label{eq:omega_dtheta}
 \omega = \mathbf{d} \theta = dq \wedge dp, \quad \text{where} \quad \theta  = p \mathbf{d}q
\end{align}
where $\mathbf{d}$ is a map from $k$-forms on a manifold to $(k+1)$-forms on the same manifold known as \emph{exterior derivative} and it is linear and satisfies the product rule \cite{abraham_manifolds_1988}. In coordinates, if a $k$-form is given by
\begin{align*}
    \alpha = \alpha_{i_1, \dots, i_k} dx^{i_1} \wedge \dots \wedge dx^{i_k},
\end{align*}
the exterior derivative is given by
\begin{align*}
   \mathbf{d} \alpha = \sum_{j=1}^k\frac{\partial \alpha_{i_1, \dots, i_k}}{\partial x^j } dx^j \wedge dx^{i_1} \wedge \dots \wedge dx^{i_k}
\end{align*}
It has the properties that $\mathbf{d}^2 = 0$  and when $f$ is a scalar function, $\mathbf{d}f$ is equal to the differential of $f$. The relation \eqref{eq:omega_dtheta} holds is due to the fact that $\omega$ is a closed form, i.e. $\mathbf{d}\omega = 0$. This is important, as it allows us to write 
\begin{align}
\left(\Phi_{X_H} \right)^{\ast} \mathbf{d} \theta - \mathbf{d} \theta  = \mathbf{d} \left(  \left(\Phi_{X_H} \right)^{\ast}\theta  -  \theta  \right) = 0 = \mathbf{d}^2 S
\end{align}
and it implies there the difference between the one-form and its pullback by the flow of the Hamiltonian system is a closed one-form. This closed one-form is defined as an exterior derivative of the function, $S: Q \times Q \times \mathbb{R} \to \mathbb{R}$ , and it is known as \emph{generating function}. The generation of symplectic transformation from $S$ comes from the relation
\begin{align}
\mathbf{d} S = p \mathbf{d}q - p_0 \mathbf{d}q_0,
\end{align}
which yields
\begin{align}\label{eq:symplecto}
p(t) = \frac{\partial S}{\partial q}(q_0, q(t), t), \quad \text{and} \quad p_0 = \frac{\partial S}{\partial q_0}(q_0, q(t), t).
\end{align}
Luckily, the generating function satisfies a partial differential equation
\begin{align}\label{eq:Hamilton_Jacobi_eqn}
\frac{\partial S}{\partial t} + H \left(q, \frac{\partial S}{\partial q} \right) = 0.
\end{align}
known as \emph{Hamilton--Jacobi equation}. Solving Hamilton's equations and solving Hamilton--Jacobi equation is one and the same thing. The main difference between the two approaches is that the latter deals with functions defined on the configuration manifold, $Q$, while the former the dynamics belong to the cotangent bundle $T^{\ast}Q$.
\\

For certain systems, it is taxing to work with generating function $S$ that generates a canonical transformation from the old to the new coordinates, $(q_0, p_0)$ and $(q(t), p(t))$ respectively, and for this it is not unwise to consider a combination of old and new coordinates. The said combinations can be constructed using Legendre transformations and one transformation that is widely used is the type--II generating function
\begin{equation}
\begin{aligned}
S_2(q_0, p(t), t) &= S(q_0, q(t), t) + \left\langle q(t), p(t) \right\rangle.
\end{aligned}
\end{equation}
Substituting $S$ in terms of $S_2$ we obtain a Hamilton--Jacobi equation \eqref{eq:Hamilton_Jacobi_eqn} for each type of generating function. 

\begin{remark}
For numerical schemes based on generating functions, it is practical to use type--II generating function as it allows for initial condition to be $S_2(p_0, p(0), 0) = 0$ and, in the same time, satisfy the identity transformation required by the Hamilton--Jacobi theory.
\end{remark}
\subsection{Optimal Control Theory}
In this section, we give a brief overview of optimal control theory and formulating deep learning as an optimal control problem. The reader is invited to read of the following references \cite{agrachev2004control,jurdjevic_1996, FB/ADL:04supp} for more detailed exposition on the subject. Let $Q$ be a configuration manifold and a system's state, on the time interval $[0,T]$, is described by $q(t) \in Q$. The goal of optimal control is to move the system from the state $q(0) = q_0$ to the state $q(T) = q_T$, where $q_0, q_T \in Q$ are predetermined states. This is actualised by choosing $\theta = (u, b) \in \mathcal{U}$, where $\mathcal{U}$ is called \emph{the space of admissible controls}, such that the \emph{cost functional} 
\begin{align}\label{eq:cost_functional}
S(q, \theta) = \int_0^T \ell(q(t), \dot{q(t)}, \theta(t)) \, dt
\end{align}
is maximised/minimised and at the same time $q(t) \in Q$ satisfies 
\begin{align}
\frac{d}{dt}q = \sigma\left( u \cdot q + b\right),
\end{align}
where $q \in Q$, $\frac{d}{dt}q \in TQ$ and $\theta=(u,b) \in \mathcal{U}$ and $\sigma:Q \times \mathcal{U} \to \mathbb{R}$. The term $\ell:TQ \times \mathcal{U} \to \mathbb{R}$, in \eqref{eq:cost_functional}, is the \emph{loss function} or \emph{cost Lagrangian}. The choice of loss function depends on the application, and it is usually chosen as a measure of how far is the state of the system, $q(t)$, from a predetermined target. There are two methods that are widely used to compute $\theta$: dynamic programming and Pontryagin's Maximum Principle. The former breaks the problem into a smaller problem on the interval $[t, t + \Delta t]$ and uses the solution of the smaller problem to construct a solution on $[0,T]$ and it yields a solution which satisfies the necessary and sufficient condition for optimality. However, it is not considered in this article and instead we use Pontryagin's Maximum Principle. 
\\

The method in essence is finding a curve on the Pontryagin's bundle $T^{\ast}Q\times \mathcal{U}$, denoted by $(q(t), p(t), \theta(t))$, that minimises \eqref{eq:cost_functional} and it also satisfies the following Hamilton's equation:
\begin{align} \label{eq:Ham_equn_sec2}
\frac{d}{dt}q(t) = \frac{\partial H}{\partial p}, \quad \frac{d}{dt}p(t) = -\frac{\partial H}{\partial q}, \quad \mbox{and} \quad \frac{\partial H}{\partial \theta} = 0,
\end{align}
with the Hamiltonian function $H: T^{\ast}Q\times \mathcal{U} \to \mathbb{R}$, defined by $H(q,p,\theta) = \left\langle p,\sigma \left(q(t), \theta(t)\right) \right\rangle - \ell \left(q(t), \theta(t)\right)$.  These equations, \eqref{eq:Ham_equn_sec2}, are the stationary variations of Hamilton's principle
\begin{align} \label{eq:action_ham_principle}
0 =\delta \mathbb{S} = \delta \int_0^T \left\langle p, \dot{q} \right\rangle - H(q,p,\theta) \, dt.
\end{align}
The equation $\dot{p} = - \frac{\partial H}{\partial q}$ is the costate equation and also known as the adjoint equation and it has gained attention in deep learning research as it is the core of methods such as neural ordinary differential equation \cite{NIPS2018_7892, sun_neupde_2019, dupont2019augmented}. Solving such Hamiltonian system is an arduous task, thus numerical methods are required to approximate the solution. The application of numerical integrators for optimal control problems has been explored in \cite{junge_discrete_2005,lee_computational_2008,lee_optimal_2008,bloch_geometric_2009, kobilarov_discrete_2011}and the key idea is to use a discrete version of the variational principle. On the contrary, in this article, the idea is, instead of applying discrete variational principle, we use the structure--preserving numerical scheme derived using Hamilton--Jacobi theory, given by \eqref{eq:scheme_gen_func}, to approximate the solution of Pontryagin's Maximum Principle \eqref{eq:Ham_equn_sec2}. Concerning the initial conditions for $(q(t), p(t))$, we are only given $q_0$ and $q_T$ and for this reason an additional step is needed to find the correct $p^0$ that would make $(q^k, p^k, \theta^k)$, where $k = 0, \dots N_t$ satisfy $q^0 = q_0$ and $q^{N_t} = q_T$. There are a number of choices for the extra step, such as the shooting method where $p^0$ is chosen at random and then adjusted. The other approach is to use nonlinear programming, and the idea is that it makes the solution of the algebraic expressions \eqref{eq:scheme_gen_func} and the boundary conditions as the solution of a nonlinear system of equations.

\subsection{Deep Neural Network Training As Optimal Control Problem}\label{sec:opt_control_deep}
Now considering deep learning, we first need to a representation of the neural networks as a dynamical system. Artificial neural networks can be viewed as a group of simple sub-systems referred to as synapses connected in a specific way. Mathematically, a synapsis is modelled as the following transformation $Q_k \to Q_{k+1}$, without the loss of generality, in this article we solely consider the case where this transformation is an endomorphism, i.e. a transformation $Q \to Q$, given by
\begin{equation}\label{eq:discrete_neural_network}
\begin{aligned}
q_i^{k+1} &= F^n\left(q_i^{k},\theta^k \right), \quad \mbox{for} \quad k = 0, \ldots, N_t -1
\end{aligned}
\end{equation}  
where $q^{k+1}_i$ is the $i^{th}$ synapsis of the $k+1$ layer, $q^k \in Q$, $\theta^k = (u^k, b^k) \in \mathcal{U}$ is the network's parameters and the map $F^k: Q \times \mathcal{U} \to Q$ is the state transition map.  The vector $q^{k+1} = (q^{k+1}_1, \ldots, q^{k+1}_{N_l}) \in Q \subset \mathbb{R}^{N_l}$ constitute a single layer in the network. For general cases, the configuration space $Q$ need not be vector spaces, they can be Hilbert spaces  or differentiable manifold, and in that case \eqref{eq:discrete_neural_network} can be regarded as a pullback operation from $Q_k$ to $Q_{k+1}$. The choice of the state transition map determines the type of network and here we consider a transition of the type 
\begin{align}\label{eq:discrete_resNet}
q_i^{k+1} &= F \left(q^k_i,  \theta^k\right) = q^k_i + \alpha \sigma \left( \sum\limits_{j=0}^{N_l}u^k_{i,j}q^k_{j} + b^k_i \right), \quad \alpha \in \mathbb{R}_{>0},
\end{align}
and this renders \eqref{eq:discrete_neural_network} as residual neural network, ResNet henceforth. The function $\sigma:  Q \times \mathcal{U} \to Q$ is the activation function and it is applied element-wise  and few of the most commonly used ones are: linear rectifier, denoted by ReLU, hyperbolic tan, sigmoid function, and softmax. Here,  $u^k_{i,j}$ is the element of the $i^{th}$ row and $j^{th}$ column of weight matrix and $b^k_i$ is the $i^{th}$ element of bias vector for the $k^{th}$ layer.  We do not specify $\theta^k = (u^k, b^k)$ for $k=0, \ldots, N_t$, before hand, at first, random weights and biases are sampled form a distribution function. Finding the values for $\theta^k$ to make the neural network model a specific system is the focus of training methods studied in the field of machine learning. 
\\

Equation \eqref{eq:discrete_resNet} is viewed as Euler discretisation of a continuous dynamical system \cite{e_proposal_2017, e_mean-field_2018, NIPS2018_7892, ruthotto_deep_2019}. The move to the continuous domain can be interpreted as taking the limit when the number of hidden layers approaches infinity, $N_l \to \infty$. The result is a dynamical system defined on the manifold $Q$ of the form 
\begin{align}\label{eq:contSystem}
    \frac{d}{dt} q(t) = \sigma(q(t), \theta(t)), \quad t \in [0, T], \quad \mbox{and} \quad q(0) =q_0,
\end{align}
where $q_0 \in Q$. 
\begin{remark}
In some literature \cite{NIPS2018_7892, dupont2019augmented}, the limit of ResNet, \eqref{eq:contSystem}, is referred to as \emph{neural ordinary differential equation}. In this article, we substitute this naming with continuous ResNet. 
\end{remark}

Training the neural network can be treated as an optimal control problem with \eqref{eq:contSystem} as a constraint, and the dataset $ \{(q_0^{(i)}, c^{(i)} ) \}_{i = 0, \dots, N}$ specifies the initial conditions and the final conditions  \cite{li2017maximum}.   We define $\mathcal{C}$ as the space of target data.  Thus, we state the training of continuous ResNet as follows
\begin{problem}\label{prob:opt_training_det}
Given the training set  $ \{(q_0^{(i)}, c^{(i)} ) \}_{i = 0, \dots, N}$, compute $\theta = (u, b)$ such that it minimises 
\begin{align}\label{eq:control_functional_det}
S = \int_0^T \ell_d \left(q(t), \theta(t)\right) \, dt + \sum_{i=0}^N L( \pi((q(T)^{(i)}), c^{(i)} ), 
\end{align}
such that it satisfies
\begin{align}
\frac{d}{dt} q(t) = \sigma\left( u(q(t)) + b(t) \right),
\end{align}
and $q(0) = q_0^{(i)} \, \text{and} \, \pi(q(T)) =  c^{(i)}$ for $i = 0, \dots, N$.
\end{problem}
Here, where $\ell : Q \times \mathcal{U} \to \mathbb{R}$ is the cost Lagrangian and in machine learning it acts as a regularisation term.  The last term of the functional $S$ is the terminal condition and it is a metric for measuring  how far the output of \eqref{eq:control_functional}, $q(T)^{(i)}$, whose starting point is $q(0) = q_0^{(i)}$,  is from the target data $c^{(i)}$. A suitable candidate is  $\sum\limits_{i=0}^N L( \pi(q(T)^{(i)}), c^{(i)} )$, where $L: \mathcal{C} \times \mathcal{C} \to \mathbb{R}$ and the projection $\pi: Q \to \mathcal{C}$ and in this article it is chosen as $\pi(x) = \frac{1}{1 + e^{-x}}$. As for $L$, in many classification problems, a common and a suitable choice is the categorical cross entropy loss function.

\section{Stochastic Optimal Control for Deep Learning}
In this section, we consider a continuous residual neural network that takes into consideration the network's uncertainties. These uncertainties are accounted for by modelling them as a stochastic process and for the time-being, we only consider Wiener processes. That means the dynamical model for continuous ResNet, given by \eqref{eq:contSystem}, is now a stochastic differential equation. The addition of stochastic terms not only render the model robust for potential variations, but also it is very rich mathematically as shall we explore in this section. 
\\

Similar to the previous sections, we present an algorithm for training deep learning using a generating function for stochastic continuous ResNet. As the dynamical system is now stochastic, the application of the Pontryagin's Maximum Principle used in this article cannot be done directly. For this we require stochastic optimal control and especially mean--field type optimal control \cite{bensoussan_mean_2013}, and, in this paradigm, the solution to the optimal control problem satisfies a system of partial differential equations instead of ordinary differential equations. The idea of using stochastic optimal control for deep learning was studied in \cite{e_mean-field_2018, persio_deep_2021}, however, the main difference is we use elimination theorem of \cite{doi:10.1098/rspa.2007.1892} to reduce the system of partial differential equations to hydrodynamic equation with advected quantity. 
\\

One major consequence of mean--field optimal control is that we can no longer utilise the Hamilton--Jacobi theory used in section \ref{sec:hamilton_jacobi_intro}. However, there are two possible ways to construct structure--preserving schemes for optimal control: using multisymplectic integrators and using integrators based on generating functions on Poisson manifolds. In this article, we only focus on the latter, as the former is studied in \cite{G2021}. The key here is to use Poisson map analogous to the symplectic transformation and we, first, reduce the system of partial differential equations that we obtain by mean--field type optimal control to a hydrodynamics equation. Then we restate the said hydrodynamics equation as a Lie--Poisson equation, which is the same equation but from the Hamiltonian viewpoint. The Poisson geometry counterpart for the Hamilton--Jacobi theory was introduced in \cite{zhong1988lie} and it is known as the Lie--Poisson Hamilton--Jacobi theory. 
\\

We give a brief overview of the theory in section \ref{sec:lphj_theory} and we introduce a variant of the theory that concerns  semidirect products. In section \ref{sec:pseudospectral} we introduce the algebras of pseudodifferential symbols whose dual space is where the Lie--Poisson equation evolves. In section \ref{sec:Discrete_LP}, we use algebras of pseudodifferential symbols on discrete meshes to derive numerical scheme based on generating function on Poisson space is presented in proposition \ref{prop:numerical_LP}.

\subsection{Hamilton--Jacobi for Systems with Symmetry}\label{sec:lphj_theory}
%G-invariant Hamiltonian
Having demonstrated how Hamilton--Jacobi theory is used as a formulation for Hamiltonian systems on $Q$, here we consider a special case when the Hamiltonian is $G$-invariant. What we mean by that, we consider a configuration manifold $Q$ and when $g \in G$ is a Lie group action acting on the Hamiltonian $H:T^{\ast}Q \to \mathbb{R}$, the following holds
\begin{align*}
H(g \cdot q, (g)^{\ast}p) = H(q,p), \quad (q,p) \in T^{\ast}Q.
\end{align*}
This allows us to perform the Lie--Poisson reduction \cite{marsden2013introduction}. We will not cover reduction theory here, the reader is advised to refer to \cite{marsden2013introduction, marsden_semidirect_1984, marsden2007hamiltonian}. In essence, the invariance allows us to choose action such that $T^{\ast}Q/G$ and the resultant quotient space is isomorphic to dual of the Lie algebra $\mathfrak{g}^{\ast}$. For example, when $Q = SO(3)$ and $G =  SO(3)$, the quotient space $T^{\ast} SO(3)/ SO(3)$ is isomorphic to $\mathfrak{so}^{\ast}(3)$, the dual of the Lie-algebra associated to $ SO(3)$. In general, the dual of Lie algebra $\mathfrak{g}^{\ast}$ is an example of what is known as a Poisson manifold \cite{marsden_semidirect_1984}. A manifold $(P, \{ \cdot, \cdot \})$ is said to be a \emph{Poisson manifold} when a manifold $P$ is equipped with a bilinear form $\{ \cdot, \cdot \}$ that has the properties that is for any function on $C^{\infty}(P)$, the form is anti-symmetric, satisfies both the Jacobi identity and Leibniz's rule. The bilinear form $\{ \cdot, \cdot \}$ is known as \emph{the Poisson structure}. Let $(P_1, \{ \cdot, \cdot \}_1)$ and $(P_2,\{ \cdot, \cdot \}_2)$ be two Poisson manifolds, then a map $\phi: P_1 \to P_2$ that satisfies $\{ f \circ \phi, h \circ \phi \}_1 = \{ f, h \}_2\circ \phi$ for any $f, h \in C^{\infty}(P_2)$ is called a \emph{Poisson map}. A detailed exposition of reduction is beyond the scope of this article, however, it is only summarised. For the dynamics, whose Hamiltonian is $H:T^{\ast}Q \to \mathbb{R}$, its integral curves when projected to a Poisson manifold $P$ using the map $\pi_P:T^{\ast}Q \to P$, coincide with the integral curves of dynamics, whose Hamiltonian is $H_P:P \to \mathbb{R}$. We can say that the flow $F$ of the Hamiltonian $H$ and the flow $F_P$ of $H_P$ are related by $\pi_P \circ F = F_P \circ \pi_P$. 
\\
\[ \begin{tikzcd}
T^{\ast}Q \arrow{r}{F} \arrow[swap]{d}{ } & T^{\ast}Q \arrow{d}{ } \\%
P \arrow{r}{F_P}& P
\end{tikzcd}
\]
The Hamiltonian dynamics on $T^{\ast}Q$ induce Lie--Poisson equation on $\mathfrak{g}^{\ast}$
\begin{align}
\frac{d}{dt}F = \{F, H_P \},
\end{align}
which is equivalent to 
\begin{align}
\frac{d}{dt} \mu = \mathrm{ad}^{\ast}_{\frac{\delta H_P}{\delta \mu}}\mu,
\end{align}
where $ \mathrm{ad}^{\ast}: \mathfrak{g} \times \mathfrak{g}^{\ast} \to \mathfrak{g}^{\ast}$ is the coadjoint map. 
\\

The Lie--Poisson reduction can also be used for the Hamilton--Jacobi theory \cite{zhong1988lie}. Starting with the Hamilton--Jacobi theory
\begin{align}\label{eq:HJ_theory}
\frac{\partial S}{\partial t}(q, q_0) + H \left(q, \frac{\partial S}{\partial q} \right) = 0,  \quad p_0 = -\frac{\partial S}{\partial q_0}, \, p(t) = \frac{\partial S}{\partial q},
\end{align}
where $S: Q \times Q \to \mathbb{R}$ is the generating function, the Hamiltonian $H:T^{\ast}Q \to \mathbb{R}$ is a $G$-invariant Hamiltonian. The generating function yields a canonical transformation from $(q_0, p_0) \to (q(t), p(t))$. Let $S_L(g) = S(q^{-1}q, q^{-1}q_0) = S(e, g)$, where $g = q^{-1}q_0$, the Lie--Poisson reduction reduces \eqref{eq:HJ_theory} to 
\begin{align}\label{eq:LPHJ_theory}
    \frac{\partial S_L}{\partial t}(g) + H \left(-\mathbf{J}_L \circ \mathbf{d}S_L \right) = 0, \quad \mu_0 = TL_g^{\ast}\mathbf{d}_g S_L, \quad \mu = TR_g^{\ast} \mathbf{d}_g S_L,
\end{align}
with $\mu_0, \mu \in \mathfrak{g}^{\ast}$. Here we have $S_L :Q \to \mathbb{R}$, $g \in G$ is the Lie group action, $TL_g^{\ast}$ and $TR_g^{\ast}$ are the left and right cotangent lift of Lie group action $g$. The map $\mathbf{J}_L:T^{\ast}Q \to \mathfrak{g}^{\ast}$ is the \emph{the momentum map} for the cotangent lift and  it is equal to
\begin{align}
  \mathbf{J}_L(\alpha) =   T_eR_g^{\ast}\alpha,
\end{align}
while $\mathbf{J}_R:T^{\ast}Q \to \mathfrak{g}^{\ast}$ is given by
\begin{align}
  \mathbf{J}_R(\alpha) =   T_eL_g^{\ast}\alpha.
\end{align}
The partial differential equation and two expressions in \eqref{eq:LPHJ_theory} are known as \emph{Lie--Poisson Hamilton--Jacobi theory} introduced in \cite{zhong1988lie}. In fact, the integrator that was used in figure \ref{fig:three_spheres}, to showcase one of the advantages of structure--preserving integrators, uses the discrete version \eqref{eq:LPHJ_theory} on $ SO(3)$.
\\

Now, the generating function $S_L$, generates a Poisson map $\phi: \mu_0 \to \mu$, which is the analogue of symplectic map on the cotangent bundle $T^{\ast} Q$. This reduction is not only valid for semisimple Lie groups such as $SO(3)$, it can be applied to semidirect products. First, let us recall some basic facts about semidirect products. Let $S = G \oplus V$ be a semidirect product space with Lie group $G$ and vector space $V$. Let $\zeta : G \to \mathrm{Aut}(V)$ be the representation of the Lie group $G$ in $V$, and $\zeta': \mathfrak{g} \to \mathrm{End}(V)$ is the Lie algebra representation in $V$.  The multiplication of $S$ is defined as
\begin{align*}
(g_1, v_1)(g_2, v_2) = \left(g_1 g_2, v_1 + \zeta(g_1)v_2 \right),
\end{align*}
where $(g_1, v_1), (g_2, v_2) \in S$. The corresponding Lie algebra $\mathfrak{s} = \mathfrak{g} \times V$ has the Lie bracket
\begin{align*}
\left[ (\xi_1, v_1), (\xi_2, v_2)\right] = \left( \left[ \xi_1, \xi_2\right], \zeta'(\xi_1) v_2 - \zeta'(\xi_2) v_1 \right),
\end{align*}
where $(\xi_1, v_1), (\xi_2, v_2) \in \mathfrak{s}$. The left action of $S$ on to itself is defined as
\begin{align}
L_{(g, v)}(h, u) = \left( g h, v + \zeta(g)u \right),
\end{align}
and tangent lift of the left action is 
\begin{align}
T_{(h, u)}L_{(g, v)} (v_{h}, u, w) = \left( T_{h}L_{g}(v_{h}, v + \zeta(g)u, \zeta(g) w \right), 
\end{align}
where $(v_{h}, u, w) \in T_{(h, u)}\left(G \oplus V \right)$. The tangent lift at the element $(g,v)(h,u) \in S$ and we are introducing it, so we can define left action momentum map $\mathbf{J}_L$ later. 
\begin{align}
T_{(g, v)(h, u)}L_{(g, v)^{-1}} (v_{gh}, v + \zeta(g)u, w) = \left( T_{gh}L_{g^{-1}}(v_{gh}, u, \zeta(g^{-1}) w \right),
\end{align}
for any $(v_{gh}, v + \zeta(g)u, w) \in T_{(g, v)(h, u)}\left(G \oplus V \right)$. The cotangent lift is 
\begin{align}\label{eq:leftcotangent}
\left( T_{(g, v)(h, u)}L_{(g, v)^{-1}}  \right)^{\ast} \left( \alpha_{h}, v, a \right) = \left( \left( T_{gh}L_{g^{-1}}\right)^{\ast} \alpha_{h}, u + \zeta(g) v, \zeta_{\ast}(g) a \right),
\end{align}
where $(\alpha_{h}, v, a) \in T^{\ast}_{(h, u)} \left(G \oplus V \right)$. 
On the other hand, the right action of $S$ on itself is 
\begin{align}
R_{(g, v)}(h, u) =\left( h g, u + \zeta(h)v\right),
\end{align}
and the tangent lift is 
\begin{align}
T_{(h, u)}R_{(g, v)} (v_{h}, u, w) = \left( T_{h}L_{g}(v_{h}, u + \zeta(h)v, \zeta(h) w \right),
\end{align}
and the lift over the point $(g,v)(h,u) \in S$, 
\begin{equation}
\begin{aligned}
T_{(h,u)(g,v)}R_{(g, v)^{-1}}& (v_{hg}, u + \zeta(h)v, w)  =\\
&\left( T_{hg}R_{g^{-1}}(v_{hg}, v, w - T_{hg}\zeta(v_{hg}) \cdot \zeta(g^{-1})v \right),
\end{aligned}
\end{equation}
and the cotangent lift is
\begin{equation}\label{eq:rightcotangent}
\begin{aligned}
\left( T_{(h,u)(g,v)}R_{(g, v)^{-1}}\right)^{\ast} &\left( \alpha_{h}, v, a \right) = \\
&\left( \left( T_{hg}R_{g^{-1}}\right)^{\ast}  \alpha_{h} - \mathbf{d} f_{\zeta(g^{-1})v}(hg), v + \zeta(h) v, a \right)
\end{aligned}
\end{equation}
With that we can write the left momentum map, $\mathbf{J}_L: T^{\ast}\left(G \oplus V \right) \to \mathfrak{s}^{\ast}_{+}$ and the right momentum map, $\mathbf{J}_R: T^{\ast}\left(G \oplus V \right) \to \mathfrak{s}^{\ast}_{-}$ by using \eqref{eq:leftcotangent} and \eqref{eq:rightcotangent} with $(g,v) = (g,v)^{-1}$ and $(h,u) = (g, v)$ and that gives
\begin{align}
    \mathbf{J}_L &= \left( T_{(e, 0)}R_{(g, v)}  \right)^{\ast} \left( \alpha_{g}, u, a \right) = \left( \left( T_eR_g\right)^{\ast} \alpha_g + (\zeta'_u)^{\ast} a, a\right), \\
    \mathbf{J}_R &= \left( T_{(e, 0)}L_{(g, v)}  \right)^{\ast} \left( \alpha_{g}, u, a \right) = \left( \left( T_eL_g\right)^{\ast} \alpha_g , \zeta^{\ast}(g) a\right).
\end{align}
\begin{theorem}\label{prop:SemiLPHJ}
Given the left $G$-invariant Hamiltonian, $H$, then the left reduced Lie--Poisson Hamilton--Jacobi equation for semidirect products for a function $S_L : G \oplus V \times \mathbb{R} \to \mathbb{R}$ is the following:
\begin{equation}
\begin{aligned}\label{eq:semiLPHJeqn}
\partial_t S_L((g,v),t) &+ H\left( TR^{\ast}_{g}\mathbf{d}_{g}S_L + (\zeta'_v)^{\ast} \mathbf{d}_v S_L , \mathbf{d}_{v}S_L \right)  = 0 \\
\end{aligned}
\end{equation}
The solution of equation \eqref{eq:semiLPHJeqn}, known as \textbf{Lie--Poisson Hamilton--Jacobi equation}, generates a Poisson map that defines the flow of the reduced Hamiltonian $H_{r}$. The Poisson map transforms the initial momentum map $\mu_0 \to \mu(t)$, where
\begin{align} 
(\mu_0, \Gamma_0) =&  \left(-TL_g^{\ast} \mathbf{d}_g S_L, \zeta^{\ast}(g) \mathbf{d}_v S_L  \right), \label{eq:LPHJ_left_action} \\%, (\zeta(g))^{\ast} \mathbf{d}_v S_L \right), \\
\text{and} \quad (\mu(t), \Gamma(t)) =& \left( TR^{\ast}_{g}\mathbf{d}_{g}S_L + (\zeta'_v)^{\ast} \mathbf{d}_v S_L , \mathbf{d}_{v}S_L \right),\label{eq:LPHJ_right_action}
\end{align}
where $g \in G$ and $v \in V$.
\end{theorem}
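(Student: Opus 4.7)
The strategy adapts Zhong and Marsden's Lie--Poisson Hamilton--Jacobi reduction on $T^*G$ to the semidirect product $T^*(G \oplus V)$, leveraging the cotangent-lift formulas \eqref{eq:leftcotangent} and \eqref{eq:rightcotangent} derived just above. Start from the unreduced Hamilton--Jacobi equation on $Q = G \oplus V$,
\begin{equation*}
\partial_t S + H(\mathbf{d}_q S) = 0, \qquad p(t) = \mathbf{d}_q S, \qquad p_0 = -\mathbf{d}_{q_0} S,
\end{equation*}
for a generating function $S : Q \times Q \times \mathbb{R} \to \mathbb{R}$. Because $H$ is left $G$-invariant, $S$ is invariant under the diagonal left action of $G$ on $Q \times Q$, so pairs $((g_0, v_0), (g_1, v_1))$ collapse to the single semidirect-product element $(g, v) := (g_0, v_0)^{-1}(g_1, v_1)$, and one defines the reduced generating function $S_L((g, v), t) := S((e, 0), (g, v), t)$.

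The next step is to translate the canonical momenta $p_0$ and $p(t)$ into elements of $\mathfrak{s}^*$ via the momentum maps $\mathbf{J}_L$ and $\mathbf{J}_R$ introduced above the theorem. The final momentum, sitting at $(g, v) \in Q$, is pushed to the identity by the right cotangent lift: substituting $(\mathbf{d}_g S_L, \mathbf{d}_v S_L)$ into \eqref{eq:rightcotangent} produces exactly $(\mu(t), \Gamma(t)) = (TR^*_g \mathbf{d}_g S_L + (\zeta'_v)^* \mathbf{d}_v S_L, \mathbf{d}_v S_L)$, while applying the left cotangent lift \eqref{eq:leftcotangent} to the initial momentum yields $(\mu_0, \Gamma_0) = (-TL^*_g \mathbf{d}_g S_L, \zeta^*(g) \mathbf{d}_v S_L)$, with the sign on $\mu_0$ inherited from the convention $p_0 = -\mathbf{d}_{q_0} S$. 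Substituting $p(t) = \mathbf{d}_{(g,v)} S_L$ into the Hamilton--Jacobi equation and invoking left $G$-invariance to write $H(\mathbf{d} S_L) = H_r \circ \mathbf{J}_L(\mathbf{d} S_L)$ produces precisely \eqref{eq:semiLPHJeqn}. The generating property then descends: a canonical transformation on $T^*(G \oplus V)$ generated by $S$ reduces to a Poisson map on $\mathfrak{s}^*$ because Poisson reduction sends Poisson maps to Poisson maps, and the coadjoint orbits are the symplectic leaves on which the reduced dynamics live.

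The main technical obstacle is the bookkeeping of the $V$-factor in the semidirect product: the term $(\zeta'_v)^* \mathbf{d}_v S_L$ appearing inside $\mu(t)$ arises from differentiating $\zeta$ at the identity and encodes the coupling between the group-momentum component $\mu$ and the advected component $\Gamma$. Keeping the left/right conventions and the $\pm$ signs of the Lie--Poisson bracket consistent throughout requires substituting $(g, v)^{-1}$ in the correct slot of the lift formulas and carefully distinguishing $\zeta^*(g)$ (the group representation dualised) from $(\zeta'_v)^*$ (the Lie-algebra representation dualised in the $v$-slot); once those substitutions are performed, the identification of the reduced momenta with the two components of $\mathbf{J}_L$ and $\mathbf{J}_R$ is a direct reading of \eqref{eq:leftcotangent} and \eqref{eq:rightcotangent}.
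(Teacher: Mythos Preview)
The paper does not supply an explicit proof of Theorem~\ref{prop:SemiLPHJ}; the theorem is stated and the text immediately proceeds to its numerical use. What the paper does provide is precisely the scaffolding you invoke: the semidirect-product cotangent lifts \eqref{eq:leftcotangent} and \eqref{eq:rightcotangent}, the resulting momentum maps $\mathbf{J}_L$ and $\mathbf{J}_R$, and the Zhong--Marsden reduction pattern \eqref{eq:HJ_theory}$\to$\eqref{eq:LPHJ_theory} for the group-only case $S_L(g)=S(e,g)$ with $g=q^{-1}q_0$. Your proposal is the natural extension of that pattern to $G\oplus V$ and reads off the reduced momenta directly from the lift formulas, exactly as the paper's setup intends. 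In that sense your argument is correct and is the proof the paper leaves implicit.

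One small point of care: in the paper's convention the reduction variable is $g=q^{-1}q_0$ (old point on the right), whereas you write $(g,v)=(g_0,v_0)^{-1}(g_1,v_1)$ with the new point on the right. Either convention works, but the sign on $\mu_0$ and the identification of which momentum map feeds the Hamiltonian depend on it; the paper's statement has $H_r$ evaluated on the $\mathbf{J}_L$ side (compare \eqref{eq:LPHJ_theory}, where $H(-\mathbf{J}_L\circ\mathbf{d}S_L)$ appears), so when you write $H(\mathbf{d}S_L)=H_r\circ\mathbf{J}_L(\mathbf{d}S_L)$ make sure the orientation of your reduction variable matches, otherwise the roles of \eqref{eq:LPHJ_left_action} and \eqref{eq:LPHJ_right_action} swap. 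This is purely a bookkeeping issue and does not affect the substance of your argument.
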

For construction of a numerical scheme, the steps followed are similar to the ones taken in section \ref{sec:hj_for_dl}. First, the equation is approximated numerically in time, using a numerical scheme, such as Euler method or Runge--Kutta method, while maintaining that is it is parameterised in $(g,v)$. For example, a first order in $t$ approximation is given by
\begin{align}
    S_L((g,v)) \approx S_0((g,v)) + \Delta t H\left( TR^{\ast}_{g}\mathbf{d}_{g}S_0 + (\zeta'_v)^{\ast} \mathbf{d}_v S_0 , \mathbf{d}_{v}S_0 \right),
\end{align}
where $S_0$ is the initial condition of $S_L$ and it is chosen such that it satisfies the identity mapping. The next step is to then use this approximation of $S_L$ and substitute it into equation \eqref{eq:LPHJ_left_action} and it allows to obtain the group action $(g,v)$ that would result in $(\mu_0, \Gamma_0)$. Once the action $(g,v)$ is found,  it is then used in \eqref{eq:LPHJ_right_action} to compute $(\mu(t), \Gamma(t))$.

\subsubsection{Lie Algebra of Pseudodifferential Symbols}\label{sec:pseudospectral}
Here we give briefly go over the basics of Lie group whose Lie algebra are pseudodifferential symbols. A profound study of such groups is not required at the moment as we only require the basics. For detailed exposition on this topic, the reader can refer to \cite{blaszak2012multi,khesin_poisson-lie_1995, khesin2008geometry}. This Lie algebra is required as it allows for the application of Lie--Poisson Hamilton--Jacobi to Lie--Poisson partial differential equations. 
\\

We begin by defining what we mean by an operator of order $m$,
\begin{align}\label{eq:pso_operator}
P = \sum\limits_{|\alpha| \leq m} a_{\alpha}(x) D^{\alpha},
\end{align}
where $D$ is a differential operator and $\alpha$ is a multiindex and $a_{\alpha} \in H^s(\mathbb{R}^n)$. The \emph{symbol} of the operator $P$ is a polynomial in $\xi$ variable
\begin{align}
p(x, \xi) = \sum\limits_{|\alpha| \leq m} a_{\alpha}(x) \xi^{\alpha},
\end{align}
and it is used a way to describe \eqref{eq:pso_operator} as a polynomial of Fourier multipliers. 
In this section we denote the algebra of pseudodifferential  symbols by $\Psi \mathrm{DS}_{m}$, where the $m$ subscript indicates the order. The union of such algebras $\Psi \mathrm{DS}_{m}$ forms an infinite dimensional algebra $\Psi \mathrm{DS} = \bigcup\limits_{k}\Psi \mathrm{DS}_{k}$ and it is closed under multiplication defined by
\begin{align}\label{eq:pso_mul}
U(x, \xi) \circ M(x, \xi) = \sum\limits_{n \geq 0} \frac{1}{n!} \left( \frac{d^n}{d \xi^n} U(x, \xi)  \right)\left(  \frac{d^n}{d x^n} M(x, \xi) \right), 
\end{align}
where $U \in \Psi \mathrm{DS}_{m}$ and $M \in \Psi \mathrm{DS}_{n}$. The multiplication, $U(x, \xi) \circ M(x, \xi)$, belongs to the group $\Psi \mathrm{DS}_{n+m} \subset \Psi \mathrm{DS}$, i.e. multiplication is a closed group operation. The Lie bracket of the Lie algebra $\Psi \mathrm{DS}$ is given by
\begin{equation}
\begin{aligned}
\left[U, M \right] &= U \circ M - M \circ U \\
&= \sum\limits_{n \geq 0} \frac{1}{n!} \left\{ \left( \frac{d^n}{d \xi^n} U(x, \xi)  \right)\left(  \frac{d^n}{d x^n} M(x, \xi) \right) - \left( \frac{d^n}{d \xi^n} M(x, \xi)  \right)\left(  \frac{d^n}{d x^n} U(x, \xi) \right)\right\}.
\end{aligned}
\end{equation}
The algebra $\Psi \mathrm{DS}$ can be written as the direct sum of two other subalgebras, i.e. 
\begin{align}\label{eq:pso_decompose}
\Psi \mathrm{DS} = \Psi \mathrm{DS}_{\mathrm{DO}} \oplus \Psi \mathrm{DS}_{\mathrm{INT}}, 
\end{align}
where
\begin{align*}
\Psi \mathrm{DS}_{\mathrm{DO}_{m}} := \left\{ \sum\limits_{\alpha= 0}^m a_{\alpha}(x) \partial^{\alpha} \right\}, \quad \text{and}\ \quad \Psi \mathrm{DS}_{\mathrm{INT}_{m}} := \left\{ \sum\limits_{\alpha= -m}^{-1} a_{\alpha}(x) \partial^{\alpha} \right\},
\end{align*}
and $\Psi \mathrm{DS}_{\mathrm{DO}} = \bigcup\limits_{m \geq 0} \Psi \mathrm{DS}_{\mathrm{DO}_{m}}$ and $\Psi \mathrm{DS}_{\mathrm{INT}} = \bigcup\limits_{m \geq 0} \Psi \mathrm{DS}_{\mathrm{INT}_{m}}$. 

Usually for Lie algebras, there's an associated Lie group and to map the Lie algebra to the Lie group an exponential map is used.  The map needs to be bijective and here is defined as the solution of the following differential equation
\begin{align}\label{eq:exponential_map_def}
\frac{d}{ds} G(x,s) = U \circ G(s), 
\end{align}
where $G$ is the group element and $U$ belongs to the Lie algebra, and the initial condition is chosen as $G(0) = \mathrm{I_d}$. The differential equation \eqref{eq:exponential_map_def} actually yields a system of differential equations, in most cases it is a triangular system, for each coefficients of symbols of $G$ and solving this system is required to obtain the coefficients of the group element $G$ \cite{khesin_poisson-lie_1995}. Here we use the $\Psi G$ as the group whose vector fields at the identity are the $\Psi \mathrm{DS}$ symbols. Basically, $\Psi G$ is a group whose elements are solutions of \eqref{eq:exponential_map_def}. The reasoning behind not considering it a full Lie group is that the full algebra of symbols $\Psi \mathrm{DS}$ does not have a corresponding Lie group nor its subalgebra $\Psi \mathrm{DS}_{\mathrm{DO}}$. However, for the subalgebra $\Psi \mathrm{DS}_{\mathrm{INT}}$ does have an associated Lie group $G_{INT}$ and the exponential map is bijective $\mathrm{exp}: \mathfrak{g}_{\mathrm{INT}}  \to G_{\mathrm{INT}}$. We denote the algebra $\Psi \mathrm{DS}_{\mathrm{INT}}$ by $\mathfrak{g}_{\mathrm{INT}}$ henceforth. The pairing between the Lie algebra $\mathfrak{g}_{\mathrm{INT}} $ and its dual $\mathfrak{g}_{\mathrm{INT}}^{\ast}$, $\left\langle \cdot, \cdot \right\rangle: \mathfrak{g}_{\mathrm{INT}} \times \mathfrak{g}_{\mathrm{INT}}^{\ast} \to \mathbb{R} $ is defined by
\begin{align}
\left\langle M, U \right\rangle = \mathrm{Tr} \left( M \circ U \right) = \int \mathrm{res} \left( M \circ U  \right), \quad U \in \mathfrak{g}_{\mathrm{INT}}, \, M \in \mathfrak{g}_{\mathrm{INT}}^{\ast}.
\end{align} 
and we actually find that $\mathfrak{g}_{\mathrm{INT}}^{\ast} \simeq \mathfrak{g}_{\mathrm{DO}}$, where $\mathfrak{g}_{\mathrm{DO}}$ denotes the Lie algebra of differential operators $\Psi \mathrm{DS}_{\mathrm{DO}}$. For Hamiltonian systems, we need to compute functional derivatives. Let $M$ be an element of  $\mathfrak{g}^{\ast}_{\mathrm{INT}}$, and consider the functional $F: \mathfrak{g}^{\ast}_{\mathrm{INT}} \to \mathbb{R}$, its functional derivative is obtained using the following pairing
\begin{align}
DF(M) \cdot M = \left\langle \frac{\delta F}{\delta M}, M \right\rangle = \sum\limits_i \int \frac{\delta F}{\delta m_i}m_i \, dx,
\end{align}
thus we obtain,
\begin{align}\label{eq:func_deriv}
\frac{\delta F}{\delta M} = \sum\limits_{i = 1}^m  \frac{\delta F}{\delta m_i} \xi^{-i}.
\end{align}
Using the functional derivatives, one can define the Lie--Poisson bracket by
\begin{align}\label{eq:lie_poisson_pso}
\left\{ F, H \right\}\left( M \right) = \left\langle M, \left[ \frac{\delta F}{\delta M}, \frac{\delta H}{\delta M} \right] \right\rangle = \int \mathrm{res} \left( M \circ \left[ \frac{\delta F}{\delta M}, \frac{\delta H}{\delta M} \right]   \right),
\end{align}
where $H$ and $F$ are functionals that depend on the coefficients of the pseudodifferential symbols.  
\\

To apply theorem \ref{prop:SemiLPHJ}, for deep learning, we first need to compute the cotangent lifts of the action on the dual of the Lie algebra. Let $A$ be an element of the group $G_{INT}$. The multiplication operator is defined by \eqref{eq:pso_mul} and to compute the tangent lift, let $B \in \Psi G_{INT}$ such that $\left.\frac{d}{ds} \right|_{s = 0} B(s) = X$, where $X \in \mathfrak{g}_{\mathrm{INT}}$. Multiplying $A$ with $B$ and taking the derivative with respect to $s$ and evaluate at $s = 0$, we have
\begin{align}\label{eq:left_tangent}
	TL_AX = \left.\frac{d}{ds} \right|_{s = 0} A(t) \circ B(s) =  A(t) \circ X, 
\end{align}
and
\begin{align}\label{eq:right_tangent}
	TR_AX =  \left.\frac{d}{ds} \right|_{s = 0}  B(s) \circ A(t) = X \circ A(t).
\end{align}
To compute the cotangent lifts, $TL^{\ast}_A$ and $TR^{\ast}_A$, are computed by pairing the tangent lifts to an element $\alpha \in \mathfrak{g}_{\mathrm{INT}}^{\ast}$, the pairing is done using the trace operator, i.e
\begin{align}\label{eq:left_cotangent}
\mathrm{Tr} \left(TL^{\ast}_A \alpha \circ X\right)  &= \mathrm{Tr} \left( \alpha \circ TL_AX\right) =  \int \mathrm{res} \left( \alpha \circ A(t) \circ X\right),
\end{align}
and
\begin{align}\label{eq:right_cotangent}
\mathrm{Tr} \left(TR^{\ast}_A \alpha \circ X\right)  &= \mathrm{Tr} \left( \alpha \circ TR_AX\right) =  \int \mathrm{res} \left( \alpha \circ X \circ A(t) \right).
\end{align}
These yield the following cotangent lift of the left and right action
\begin{align}
TL^{\ast}_{A } \alpha &= \alpha \circ A(t), \label{eq:left_cotangent_lift}\\
TR^{\ast}_{A } \alpha &= A(t) \circ \alpha \label{eq:right_cotangent_lift}.
\end{align}

\subsection{Hydrodynamics Formulation of Deep Learning}\label{sec:deep_poisson}
%Sympectic
From the basic iterative relation of deep neural networks, we can interpret them, in the language of geometric mechanics, as applying a pullback to a curve on a manifold. It is very reminiscent of the particle relabelling symmetry of fluids labels, i.e the action of the group of diffeomorphism, denoted by $\mathrm{Diff}(Q)$, on the Lagrangian labels \cite{arnold1966geometrie, 10.2307/1970699}. The vector field associated with $\mathrm{Diff}(Q)$ is denoted by $\mathfrak{X}(Q)$. The neurons of the hidden layers can be viewed as Lagrangian labels 
\begin{align}
\frac{d}{dt}q(x,t) = \sigma(u \circ q(x,t)), 
\end{align}
where $q \in \mathrm{Diff}(Q)$ and $u \in \mathfrak{X}(Q)$, and each time they propagate through to the next layer, a transformation is applied to the labels. Let $(\Omega, \mathcal{F}, \mathbb{P})$ be the probability space, with $\Omega$ the sample space, and $\mathcal{F}$ is $\sigma$-algebra of measurable events of $\Omega$ and $\mathbb{P}$ is the probability measure on $(\Omega, \mathcal{F})$.  We consider the stochastic differential equation for continuous ResNet  
\begin{align}\label{eq:sde_resnet_0}
d_tq(t) =  (u \circ q) \, dt + \nu \sum\limits_k \xi_k dW_t, \quad q(0) = q_0, \quad \text{and} \quad q(T) = q_T,
\end{align}
where $u \in \mathfrak{X}(Q)$ is a vector field that plays the role of network parameters, and $\xi_k \in \mathfrak{X}(Q)$ are vector fields that couple the Brownian motion $W_t^k$ to $\mathfrak{X}(Q)$ and $\nu \in \mathbb{R}$. Notice in \eqref{eq:sde_resnet_0} the  activation function was removed because it is redundant in this form. Here, the network parameters are specified to be a vector field on the group of diffeomorphisms on $Q$ and can be viewed as a combination of the weight matrix and activation function. Since the system is stochastic, the cost functional \eqref{eq:control_functional} cannot be used in its current form, instead, we use its expected value. Thus, we can state the training problem of stochastic continuous ResNet as follows: 
\begin{problem}\label{prob:opt_training}
Let $(\Omega, \mathcal{F}, \mathbb{P})$  and given the training set  $ \{(q_0^{(i)}, c^{(i)} ) \}_{i = 0, \dots, N}$, compute $u \in $ such that it minimises 
\begin{align}\label{eq:control_functional}
S = \int_{Q}\int_0^T \rho(x,t) \ell_d \left(u(t)\right) \, dt \, dx + \sum_{i=0}^N \int_{Q} \rho(x,t)  L( \pi((q(T)^{(i)}), c^{(i)} ) \, dx , 
\end{align}
where $\rho(x,t)$ is the probability density function. The solution should make $q(t)$ satisfy
\begin{align}\label{eq:sde_resnet}
d_tq = u\circ q \, dt + \nu\sum\limits_k \xi_k dW_t^k,
\end{align}
and $q(0) = q_0^{(i)} \, \text{and} \, \pi(q(T)) =  c^{(i)}$ for $i = 0, \dots, N$. 
\end{problem}
The solution method for this optimal control problem is mean--field optimal control, described in \cite{bensoussan_mean_2013}. Instead of controlling \eqref{eq:sde_resnet}, the equation for the evolution of probability density function associated with $q(t)$, known as the Fokker--Planck equation, 
\begin{align}
\partial_t \rho(x,t) + \nabla \cdot \left( u(x,t) \rho(x,t) \right) - \frac{\nu^2}{2}\nabla \cdot (\nabla \rho(x,t)) = 0, \quad \rho(x,0) = \rho_0(x).
\end{align}
This way, we can apply Pontryagin's Maximum Principle in a similar way to the deterministic optimal control problems. The result is the mean--field games system
\begin{align} 
&\rho(x,t) \frac{\delta \ell_d}{\delta u} - \rho(x,t) \nabla \lambda(x,t) = 0 \label{eq:opt_cond_mfg} \\
&\partial_t \rho(x,t) + \nabla \cdot \left( u(x,t) \rho(x,t) \right) - \frac{\nu^2}{2}\nabla \cdot (\nabla \rho(x,t)) = 0 \label{eq:fokker_planck} \\
&\partial_t \lambda(x,t) + u(x,t) \nabla \lambda(x,t) + \frac{\nu^2}{2} \nabla \cdot (\nabla \lambda(x,t) ) = \ell_d \label{eq:stoch_hjb}
\end{align}
where it is basically a coupled system of Fokker--Planck equation, \eqref{eq:fokker_planck}, and stochastic Hamilton--Jacobi--Bellman equation, \eqref{eq:stoch_hjb}. In \cite{G2021}, the network parameter $u$ is substituted by $u = \omega + \frac{\nu^2}{2}\nabla \mathrm{log}(\rho)$ and this is done to reduce the system of equations \eqref{eq:opt_cond_mfg}, \eqref{eq:fokker_planck} and \eqref{eq:stoch_hjb} to the following equation 
\begin{equation}\label{eq:mfg_euler_poincare}
\begin{aligned}
&\partial_t \frac{\delta \ell_d}{\delta \omega} + \mathrm{ad}^{\ast}_{\omega}\frac{\delta \ell_d}{\delta \omega} = \rho \nabla \frac{\delta \ell_d}{\delta \rho}\\
&\partial_t \rho + \nabla \cdot \left( \omega \rho \right) = 0
\end{aligned}
\end{equation}
where $\mathrm{ad}^{\ast}: \mathfrak{X}  \times \mathfrak{X}^{\ast} \to \mathfrak{X}^{\ast}$ is the coadjoint map and in this case it is defined in coordinates of $\mathbb{R}^N$ as
\begin{align}\label{eq:coadjoint_operator}
\left( \mathrm{ad}_\omega^{\ast} \frac{\delta \ell_d}{\delta \omega} \right)_i = \left( \partial_{x^j} \left( \left(\frac{\delta \ell_d}{\delta \omega}\right)_i \omega^j \right) + \left(\frac{\delta \ell_d}{\delta \omega}\right)_j \partial_{x^i}\omega^j \right)_{i}.
\end{align} 
One thing to note that the approach of \cite{G2021} relies on the Lagrangian formulation of mechanics and we cannot apply Hamilton--Jacobi theory directly, but in order to do so, we need to translate it into the Hamiltonian point of view. 

\begin{remark}
It is worth pointing out that when $\nu \to 0$ in \eqref{eq:mfg_euler_poincare}, it is in effect the celebrated hydrodynamics formulation of the Monge--Kantorovitch mass problem of optimal transport \cite{benamou2000computational}. And for the optimal control problem, with $\nu \to 0$ in \eqref{eq:opt_cond_mfg}, \eqref{eq:fokker_planck} and \eqref{eq:stoch_hjb}, it was shown that it reduces to a Euler's fluid equation in \cite{bloch2000optimal}.
\end{remark}
\begin{remark}
The right hand term that appears in \eqref{eq:mfg_euler_poincare}, the density times a potential of a scalar does appear, albeit with a different sign and a scalar function, when the metamorphosis equations of pattern matching are written as Euler-Poincar\'{e} equations \cite{metamorphosis}. Moreover, it is related to a similar system of partial differential equations known as the two--component Camassa--Holm equations \cite{chen2006two, Falqui_2005}
\end{remark}

\subsection{Deep Learning as a Lie-Poisson System}\label{sec:lp_form}
%Lie--Poisson Formulism
In this section, we formulate the reduced mean--field games \eqref{eq:mfg_euler_poincare} to a Lie--Poisson system, and in doing so, we translate it into the language of Lie--Poisson Hamilton--Jacobi theory. This allows us to construct a Poisson mapping from the target of the training dataset to the initial conditions.
\\

This first problem that we need to overcome is that the Hamiltonian for the mean--field games \eqref{eq:opt_cond_mfg}, \eqref{eq:fokker_planck} and \eqref{eq:stoch_hjb} is a covariant Hamiltonian and theorem \ref{prop:SemiLPHJ} is not adapted for such Hamiltonians. For covariant Hamiltonians, there has been attempt to introduce a general Hamilton--Jacobi theory, such as the work done in \cite{horava_covariant_1991, mclean_covariant_2000, von_rieth_hamilton-jacobi_1984}. An alternative approach in \cite{vankerschaver_generating_2013}, where boundary integrals of the variational principle are used to derive generating functions for partial differential equations. Having said that, the route taken here is to expand the work with a larger configuration manifold, that is, the group of pseudospectral symbols instead of the group diffeomorphism.  In order to use Lie--Poisson Hamilton--Jacobi theory, we discretise the group of pseudospectral symbols in space only. This is inspired by the work of \cite{pavlov2011structure} where the group of diffeomorphisms is discretised as $GL(N)$ and EPDiff equation is written in terms of vector fields on dual of the discretised Lie algebra which is a subalgebra of $\mathfrak{gl}(N)$. 
\\

For the deep learning problem, the Hamiltonian for the system \eqref{eq:mfg_euler_poincare} is
\begin{equation}\label{eq:hamiltonian_deep_0}
\begin{aligned}
H &= \frac{1}{2} \int_{Q} \rho \| \omega \|^2 d\mathrm{Vol}(x) + \frac{\nu^2}{2}\int_{Q}  \rho \left\langle \omega, \nabla \mathrm{log}(\rho) \right\rangle  d\mathrm{Vol}(x) \\
&+ \frac{\nu^4}{8} \int_{Q} \rho \|  \nabla \mathrm{log}(\rho)\|^2 d\mathrm{Vol}(x)
\end{aligned}
\end{equation}
and $H$ is a function defined on the dual of the Lie algebra of a semidirect product $\mathfrak{s}^{\ast} = \mathfrak{g}_{\mathrm{INT}}^{\ast}\oplus \mathrm{Dens}(Q)$. The corresponding group is $S = G_{\mathrm{INT}} \oplus \mathrm{Dens}(Q)$. From the definitions of group actions on itself, tangent and cotangent lifts defined in section \ref{sec:lphj_theory}, here $G_{\mathrm{INT}}$ acts on $\mathrm{Dens}(Q)$ by push-forward, i.e.
\begin{align}\label{eq:zeta}
    \zeta(g)\rho = (g)_{\ast}\rho, \quad g \in G_{\mathrm{INT}}
\end{align}
Another important map that we need to define is the algebra $\mathfrak{g}_{\mathrm{INT}}$ representation on $\mathrm{Dens}(Q)$, denoted by $\zeta': \mathfrak{g}_{\mathrm{INT}} \to \mathrm{End}\left( \mathrm{Dens}(Q) \right)$, is given by 
 \begin{align}
 \zeta'(U) \rho = - \pounds_{U} \rho,
 \end{align}
where $U \in \mathfrak{g}_{\mathrm{INT}}$ and $\pounds_{U}$ denotes the Lie derivative along the direction of $U$, and in coordinates $ \zeta'(U) \rho =  \mathrm{div}\left( \rho U \right)$. Once we have the representation on the Lie algebra, we derive the representation of the dual of the Lie algebra $\mathfrak{g}_{\mathrm{INT}}^{\ast}$, i.e. the mapping $\left( \zeta'_{U} \right)^{\ast} : \mathrm{Dens}(Q)^{\ast} \to \mathfrak{g}^{\ast}_{\mathrm{INT}}$. It is obtained by using the pairing and for any $\rho \in \mathrm{Dens}(Q)$, $\eta \in \mathrm{Dens}(Q)^{\ast}$ and $U \in \mathfrak{g}_{\mathrm{INT}}$ we have
\begin{equation}\label{eq:diamond_op}
\begin{aligned}
  \left\langle \eta \diamond \rho, U \right\rangle &= \left\langle \left( \zeta'_{\eta} \right)^{\ast}\rho, U \right\rangle =- \left\langle \eta, \zeta'_{U}\rho \right\rangle \\
  &= -\int \eta \nabla \cdot (U \rho) \, dx =-\int \rho \nabla \eta \cdot U \, dx ,
\end{aligned}
\end{equation}
and this means $\eta \diamond \rho = \left( \zeta'_{\eta} \right)^{\ast}\rho = \rho \nabla \eta$. The operator $\diamond: \mathrm{Dens}(Q) \times \mathrm{Dens}(Q)^{\ast} \to \mathfrak{g}_{\mathrm{INT}}^{\ast}$ is often used in describing motion of various hydrodynamics, introduced in \cite{holm_eulerpoincare_1998}, and it used to map Clebsch variables to the dual of Lie algebra  \cite{holm_poisson_1983}. 
\\

Going back to training of deep neural networks on Poisson manifolds and for this particular problem  we choose the symbol to be
\begin{align}\label{eq:symbol_diff}
    U(x,\xi) = u(x) \, \xi^{-1} \in \mathfrak{g}_{\mathrm{INT}},
\end{align}
and we require that the dual space $\mathfrak{g}_{\mathrm{INT}}^{\ast}$ should be the space of pseudodifferential symbols of order $2$, i.e.
\begin{align}
M(x,\xi) = \sum_{i=0}^2 m_i(x)\xi^i
\end{align}
The importance of these conditions on the Lie algebra is that for the functionals $F$ and $H$ substituting them into the Lie--Poisson bracket \eqref{eq:lie_poisson_pso}, we obtain
\begin{align}
    \left\{ F,H \right\} = \int m(x) \left[ \frac{\delta F}{\delta m}\nabla\left( \frac{\delta H}{\delta m}\right) - \frac{\delta H}{\delta m}\nabla\left( \frac{\delta F}{\delta m}\right)\right] \, dx.
\end{align}
The Lie--Poisson bracket of functionals on the dual of the Lie algebra of semidirect product $\mathfrak{s}^{\ast} = \mathfrak{g}_{\mathrm{INT}}^{\ast} \oplus \mathrm{Dens}(Q)^{\ast}$ is
\begin{align}\label{eq:Lie_Poisson_bracket_semi}
\{ F, H \}_{\pm}(M, \rho) = \pm \left\langle M, \left[ \frac{\delta F}{\delta M},  \frac{\delta H}{\delta M}\right] \right\rangle \mp \left\langle  \rho, \pounds_{\frac{\delta F}{\delta M}}\frac{\delta H}{\delta \rho} - \pounds_{\frac{\delta H}{\delta M}}\frac{\delta F}{\delta \rho}  \right\rangle,
\end{align}
where $F,H: \mathfrak{s}^{\ast} \to \mathbb{R}$ and the first term of the Lie--Poisson bracket is given by \eqref{eq:lie_poisson_pso}. In explicit terms,
\begin{equation}\label{eq:Lie_Poisson_bracket_explict}
\begin{aligned}
\{ F, H \}_{\pm}(M, \rho) &= \pm\int_{Q}m(x) \left[ \frac{\delta F}{\delta m}\nabla\left( \frac{\delta H}{\delta m}\right) - \frac{\delta H}{\delta m}\nabla\left( \frac{\delta F}{\delta m}\right)\right] \, dx \\
&\mp\int_{Q} \rho \left[ \left(\frac{\delta H}{\delta m} \cdot \nabla \right) \frac{\delta F}{\delta \rho} - \left(\frac{\delta F}{\delta m} \cdot \nabla \right) \frac{\delta H}{\delta \rho}\right] \, dx.
\end{aligned}
\end{equation}
\begin{remark}
This Lie--Poisson bracket is the same bracket for ideal compressible fluid \cite{marsden1983hamiltonian, marsden_semidirect_1984}.
\end{remark}
Since the Lie--Poisson bracket is in terms of $(M, \rho) \in \mathfrak{s}^{\ast}$, and that necessitates the need to write the Hamiltonian \eqref{eq:hamiltonian_deep_0} in those coordinates and for that we use $M= \rho \omega$,
\begin{equation}\label{eq:hamiltonian_lp_deep}
\begin{aligned}
H(M, \rho) &= \frac{1}{2} \int_{Q} \frac{\|M \|^2}{\rho} d\mathrm{Vol}(x) + \frac{\nu^2}{2}\int_{Q}   \left\langle M, \nabla \mathrm{log}(\rho) \right\rangle  d\mathrm{Vol}(x) \\
&+ \frac{\nu^4}{8} \int_{Q} \rho \|  \nabla \mathrm{log}(\rho)\|^2 d\mathrm{Vol}(x)
\end{aligned}
\end{equation}
Plugging in $F = (M, \rho)$, we obtain the Lie--Poisson on semidirect product space
\begin{equation}\label{eq:Lie_poisson_semi}
\begin{aligned}
\partial_t M &= \{M,H \}_{+}  \\
\partial_t \rho &= \{\rho ,H \}_{+},
\end{aligned}
\end{equation}
or in coordinates od $\mathbb{R}^N$, it is written as
\begin{equation}\label{eq:Lie_poisson_semi2}
\begin{aligned}
\partial_t m_i &= m_i\partial_{x^i} \frac{\delta H}{\delta m_i} + \partial_{x^i}\left(m_i \frac{\delta H}{\delta m_i} \right) + \rho\partial_{x^i} \frac{\delta H}{\delta \rho}, & \text{for} \, i=1,\dots, N\\
\partial_t \rho &= - \partial_{x^j}\left( \rho \frac{\delta H}{\delta m_j}\right),
\end{aligned}
\end{equation}
where $\mathrm{ad}^{\ast} : \mathfrak{g}_{\mathrm{INT}} \times \mathfrak{g}_{\mathrm{INT}}^{\ast} \to \mathfrak{g}_{\mathrm{INT}}^{\ast}$ and it is the same as in \eqref{eq:coadjoint_operator}. 
\\

\begin{proposition}
The deep learning dynamics can be written in Lie--Poisson form
\begin{equation}\label{eq:Lie_poisson_semi3}
\begin{aligned}
\partial_t M &= \pm \mathrm{ad}^{\ast}_{\frac{\delta H}{\delta M}} M \mp \frac{\delta H}{\delta \rho} \diamond \rho \\
\partial_t \rho &= \mp \pounds_{\frac{\delta H}{\delta M}} \rho,
\end{aligned}
\end{equation}
where $\mathrm{ad}^{\ast} : \mathfrak{g}_{\mathrm{INT}} \times \mathfrak{g}_{\mathrm{INT}}^{\ast} \to \mathfrak{g}_{\mathrm{INT}}^{\ast}$ and it is the same as in \eqref{eq:coadjoint_operator}. 
\end{proposition}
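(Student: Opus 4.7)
The plan is to obtain the stated Lie--Poisson form by unpacking the abstract Lie--Poisson bracket \eqref{eq:Lie_Poisson_bracket_semi} via the dynamical equations \eqref{eq:Lie_poisson_semi}, and then recognising the resulting intrinsic objects as the coadjoint action $\mathrm{ad}^{\ast}$ and the diamond operator $\diamond$ defined in \eqref{eq:diamond_op}. That is, I will compute $\{M, H\}_{\pm}$ and $\{\rho, H\}_{\pm}$ weakly against test elements, identify each term with its coordinate-free dual counterpart, and strip off the test element. Since the proposition is essentially a restatement of \eqref{eq:Lie_poisson_semi} in intrinsic notation, the work is really bookkeeping with dualities and sign conventions rather than any deep calculation.

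For the momentum equation, I would pair $\partial_t M = \{M, H\}_{\pm}$ with an arbitrary $\xi \in \mathfrak{g}_{\mathrm{INT}}$. The functional derivatives of $F(M,\rho) := \langle M, \xi\rangle$ are $\delta F/\delta M = \xi$ and $\delta F/\delta \rho = 0$, so \eqref{eq:Lie_Poisson_bracket_semi} collapses to
\begin{equation*}
\langle \partial_t M, \xi \rangle = \pm \left\langle M, \left[\xi, \tfrac{\delta H}{\delta M}\right] \right\rangle \mp \left\langle \rho, \pounds_{\xi} \tfrac{\delta H}{\delta \rho} \right\rangle.
\end{equation*}
Using antisymmetry of the Lie bracket together with the defining relation $\langle \mathrm{ad}^{\ast}_{u} \mu, v\rangle = \langle \mu, [u,v]\rangle$, the first term is $\mp \langle \mathrm{ad}^{\ast}_{\delta H/\delta M} M, \xi\rangle$. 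For the second term, I would compare against \eqref{eq:diamond_op}, which expresses $\langle \eta \diamond \rho, \xi\rangle$ in terms of $\pounds_{\xi}\rho$ integrated against $\eta$; integration by parts converts $\langle \rho, \pounds_{\xi}(\delta H/\delta \rho)\rangle$ into $\langle (\delta H/\delta \rho) \diamond \rho, \xi\rangle$ up to a sign fixed by the convention in \eqref{eq:diamond_op}. Since $\xi$ was arbitrary, this yields the first line of the claimed system.

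For the density equation I would pair $\partial_t \rho = \{\rho, H\}_{\pm}$ with a test function $\phi$, giving $\delta F/\delta \rho = \phi$ and $\delta F/\delta M = 0$. Only the second term of \eqref{eq:Lie_Poisson_bracket_semi} survives, reducing to $\pm \langle \rho, \pounds_{\delta H/\delta M}\phi\rangle$. Treating $\rho$ as a density and $\phi$ as a function, the standard adjoint relation $\langle \rho, \pounds_u \phi\rangle = -\langle \pounds_u \rho, \phi\rangle$ (with $\pounds_u \rho = \nabla\cdot(u\rho)$) gives $\langle \partial_t \rho, \phi\rangle = \mp \langle \pounds_{\delta H/\delta M}\rho, \phi\rangle$, producing the second line. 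It is worth noting that this is exactly the coordinate expression in \eqref{eq:Lie_poisson_semi2}, so the identification with \eqref{eq:coadjoint_operator} can be cross-checked directly.

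The main obstacle I anticipate is not any genuine calculation but rather the careful tracking of sign conventions, because three distinct pairings appear (coadjoint on $\mathfrak{g}_{\mathrm{INT}}^{\ast}$, the diamond on $\mathrm{Dens}(Q)^{\ast}$, and the Lie derivative duality between functions and densities). The $\pm$ ambiguity in \eqref{eq:Lie_Poisson_bracket_semi} corresponds to the left/right invariance choice, and one must check that all three dualities are consistent with the same choice so that the resulting signs in the proposition are mutually compatible; in particular, the minor inconsistency in the statement of \eqref{eq:diamond_op} between $\zeta'_U$ and $\zeta_U$ has to be resolved before matching terms. Once these conventions are fixed coherently, the proof is a direct unwrapping of the bracket.
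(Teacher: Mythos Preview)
Your proposal is correct and follows essentially the same route as the paper: both arguments pair the equations against arbitrary test elements of $\mathfrak{s}$ and then identify the resulting terms with $\mathrm{ad}^{\ast}$, $\diamond$, and $\pounds$ via their defining dualities. Your organisation via the linear functionals $F=\langle M,\xi\rangle$ and $F=\langle \rho,\phi\rangle$ is the standard one and, if anything, slightly more systematic than the paper's direct pairing computation; your caveat about sign conventions is well placed, since the paper itself is loose on this point.
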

\begin{proof}
Let $(\eta, \varrho), (u, a) \in \mathfrak{s}$ and $(M, \rho) \in \mathfrak{s}^{\ast}$, from the definition of the coadjoint map and the diamond operator, we then have
\begin{align*}
\left\langle \left(  \mathrm{ad}^{\ast}_{u}M - a \diamond \rho,- \pounds_{u}\rho \right), \left( \eta, \varrho \right) \right\rangle & = \left\langle  \mathrm{ad}^{\ast}_{u}M - a \diamond \rho, \eta \right\rangle + \left\langle- \pounds_{u}\rho , \varrho \right\rangle \\
&= \left\langle M, \mathrm{ad}_u \eta  \right\rangle + \left\langle \rho , \pounds_{u}\varrho - \pounds_{\eta}a \right\rangle \\
&= \left\langle M, [u, \eta]  \right\rangle + \left\langle \rho , \pounds_{u}\varrho - \pounds_{\eta}a \right\rangle \\
&= \left\langle M, [u, \eta]  \right\rangle - \left\langle \rho \nabla a, \eta \right\rangle + \left\langle \rho , \pounds_{u}\varrho  \right\rangle \\
&= \left\langle \{M, u \},\eta  \right\rangle + \left\langle \pounds_{u} \rho , \varrho  \right\rangle \\
&= \left\langle \left(\{M, u \}, \{\rho, u \}\right), \left( \eta, \varrho \right)  \right\rangle.
\end{align*}
From the equation we get
\begin{align*}
    \left(  \mathrm{ad}^{\ast}_{u}M - a \diamond \rho,- \pounds_{u}\rho \right) = \left(\{M, u \}, \{\rho, u \}\right)
\end{align*}
and thus we can write equation \eqref{eq:Lie_poisson_semi} as
\begin{align*}
\partial_t M &= \pm \mathrm{ad}^{\ast}_{\frac{\delta H}{\delta M}} M \mp \frac{\delta H}{\delta \rho} \diamond \rho \\
\partial_t \rho &= \mp \pounds_{\frac{\delta H}{\delta M}} \rho.
\end{align*}
\end{proof}

For the time being, we only consider the case $Q$ is $\mathbb{R}^n$, in order for the definitions of $\Psi \mathrm{DS}$ to hold. Now, we can apply the Lie--Poisson Hamilton--Jacobi theory for semidirect products as
\begin{equation}
\begin{aligned}
\partial_t S_L((A,\rho)) &+ H\left( TR^{\ast}_{A}\mathbf{d}_{A}S_L + (\zeta'_{\rho})^{\ast} \mathbf{d}_{\rho} S_L , \mathbf{d}_{\rho}S_L \right)  = 0 \\
\end{aligned}
\end{equation}
The solution of equation \eqref{eq:semiLPHJeqn} generates a Poisson map that defines the flow of the reduced Hamiltonian $H$. The Poisson map transforms the initial momentum map $M_0 \to M(t)$, where
\begin{equation}\label{eq:Pi_0_mfg}
\begin{aligned}
(M_0, \rho_0) =& (- \mathbf{d}_A S_L \circ A, \zeta(A)^{\ast} \mathbf{d}_{\rho} S_L),\\
\text{and} \quad (M(t), \rho(t)) =& \left( A \circ \mathbf{d}_{A}S_L  + (\zeta'_{\rho})^{\ast} \mathbf{d}_{\rho} S_L  , \mathbf{d}_{\rho}S_L \right).
\end{aligned}
\end{equation}
Now we decipher the two previous equations in \eqref{eq:Pi_0_mfg}. The first component $\mathbf{d}_A S_L \circ A$ is the multiplication in $\Psi \mathrm{DS}$ and the second, as mentioned, is the action of $\Psi G$ acting on $\mathrm{Dens}(Q)$ by pullback. The term $(\zeta'_u)^{\ast} \mathbf{d}_u S_L$ is the diamond operator defined in \eqref{eq:diamond_op}.   The terms $(\mathbf{d}_A S_L, \mathbf{d}_u S_L)$ can be eliminated from the second equation in \eqref{eq:Pi_0_mfg}, by taking the left action of $(A, u)^{-1}$ of $(M_0, \rho_0)$. This results in
\begin{equation}\label{eq:coadjoint_action_0}
\begin{aligned}
    (M(t), \rho(t)) &= (- A \circ M_0 \circ A^{-1} + (\zeta'_{\rho})^{\ast} \rho_0(A \circ x), \rho_0(A \circ x) )\\
    &= \mathrm{Ad}_{(A,u)^{-1}}^{\ast} (M_0, \rho_0),
\end{aligned}
\end{equation}
where $\mathrm{Ad}_{(A,u)^{-1}}^{\ast} : S \times \mathfrak{s}^{\ast} \to \mathfrak{s}^{\ast}$ is the coadjoint operator. 
\\

Despite the Lie--Poisson Hamilton--Jacobi theory introduces a level of complexity to the matter; the power of the method comes out when it is employed for constructing numerical algorithms. Up until now, we have worked exclusively with continuous-time representation of deep learning, but for it to applicable we need to discretise it.  The discrete Lie--Poisson Hamilton--Jacobi theory would yield an $N$ dimensional system that preserves the geometric structure of the continuous equations. Furthermore, the flow or solution trajectories, generated by the Hamiltonian vector-field corresponding to \eqref{eq:Lie_Poisson_bracket_explict} belong to the coadjoint orbit. \emph{The coadjoint orbit}, $\mathcal{O}_{(M, \rho)}$, is the set of elements of $\mathfrak{s}^{\ast}$ that are moved by the elements of the group $S = \Psi G \times\mathrm{Dens}(Q)$, or mathematically,
\begin{align}
    \mathcal{O}_{(M, \rho)} := \left\{ \mathrm{Ad}^{\ast}_{(A, u)^{-1}}(M, \rho) \, | \, (A, u) \in \Psi G \right\}, 
\end{align}
and $(M, \rho) \in \mathfrak{s}^{\ast}$. The  coadjoint orbit is in fact a sub-manifold of $\mathfrak{s}^{\ast}$ and it is endowed with a symplectic structure \cite{marsden_coadjoint_1983}. The Lie--Poisson Hamilton--Jacobi constructs a numerical solution that naively belongs to $\mathcal{O}_{(M, \rho)}$ and this makes it worth studying. The preservation of the coadjoint orbit was further looked at in \cite{Marsden_1999}. 
\\

What is interesting regarding the discrete equations is that they are also a Hamiltonian system and this is a consequence of applying reduction to generating function on cotangent bundle.  In \cite{G2020} it was shown how to derive the modified Hamiltonian for the discrete equations obtained from Lie--Poisson Hamilton--Jacobi theory, albeit the systems considered in \cite{G2020} are stochastic. However, the same steps can be followed for the deterministic case.
\\

The fact that mean--field games system \eqref{eq:opt_cond_mfg}, \eqref{eq:fokker_planck} and \eqref{eq:stoch_hjb} is written as a coadjoint motion, then according to theorem $2.7$ in \cite{gay-balmaz_clebsch_2011}, the optimal solution is restricted to $\mathcal{O}_{(M, \rho)}$. This means $(M_0, \rho_0)$ as well as $(M_T, \rho_T)$ are on the same coadjoint orbit $\mathcal{O}_{(M, \rho)}$ and that means the curve $(M(t), \rho(t)) \in \mathfrak{s}^{\ast}$ is a minimising curve of the functional and it also connects the two densities $\rho_0$ and $\rho_T$.

\section{Hamilton--Jacobi Integrators for Deep Learning}\label{sec:hj_for_dl}

In this section we show the application of generating function in constructing a structure--preserving integrator for training deep neural network. Symplectic transformations, that are generated by the solution of the Hamilton--Jacobi equation, preserve the structure and it is the basis for construction of the numerical scheme. This is done by applying a numerical method for solving the Hamilton--Jacobi equation and then use that with \eqref{eq:symplecto} to obtain the approximated solution. From Pontryagin's Maximum Principle, the necessary conditions for optimality of the solution of an optimal control problem satisfy a Hamiltonian system \eqref{eq:Ham_equn_sec2}. Thus, it is fitting that a geometric integrator is used to solve optimal control problem to guarantee that the numerical solution is representative of the control system. First, we describe how optimal is used to formulate the training of deep neural network in section \ref{sec:opt_control_deep} and, here, we adhere to the approach of \cite{li2017maximum, benning2019deep, celledoni_structure_2020}. The resultant Hamilton's equations are then solved numerically using structure--preserving integrator. 
\\

The idea of using generating functions for numerical integrators was first introduced in \cite{kang_construction_1989, channell_symplectic_1990}. Recall that the solution of a Hamiltonian system is a symplectomorphism and one time-step integration is also a symplectic transformation $(q^k, p^k) \to (q^{k+1}, p^{k+1})$ generated by the solution of Hamilton--Jacobi equation at time $\Delta t$. Using a generating function of type--II and \eqref{eq:symplecto}, we obtain the following numerical scheme
\begin{equation}\label{eq:scheme_gen_func}
\begin{aligned}
q^{k+1} = q^k + \frac{\partial S_2}{\partial p}(q^k, p^{k+1}, \Delta t), \quad p^{k+1} = p^k - \frac{\partial S_2}{\partial q}(q^k, p^{k+1}, \Delta t)
\end{aligned}
\end{equation}
and such scheme preserves the symplectic form $\omega = \mathbf{d}q^{k} \wedge \mathbf{d} p^{k} = \mathbf{d}q^{k+1} \wedge \mathbf{d} p^{k+1}$. The generating function, $S_2$ is the approximation of solution of Hamilton--Jacobi equation and it can be expressed as a truncated power series
\begin{align}\label{eq:power_s2}
S_2(q, p, t) = \sum\limits_{i=1}^{m}t^i S_{2,i}(q, p, t), 
\end{align}
where the coefficients of the power series are functions in term of the Hamiltonian $H$ and its partial derivatives
\begin{align*}
S_{2,1} = H(q,p), \quad S_{2,2} = \frac{1}{2} \left\langle \frac{\partial H}{\partial q}, \frac{\partial H}{\partial p} \right\rangle, \quad S_2,3 =  \dots
\end{align*}
The order of scheme \eqref{eq:scheme_gen_func} is determined by the number of terms, $m$, in the approximation \eqref{eq:power_s2}.  
\begin{remark}
When $m = 1$, i.e. $S = \Delta t H$, the scheme \eqref{eq:scheme_gen_func} becomes the symplectic Euler scheme
\begin{align*}
q^{k+1} = q^k + \Delta t \frac{\partial H}{\partial p}(q^k, p^{k+1}, \Delta t), \quad p^{k+1} = p^k - \Delta t \frac{\partial H}{\partial q}(q^k, p^{k+1}, \Delta t)
\end{align*}
\end{remark}

The application of structure--preserving integrators based on generating function can be used to approximate the solution of optimal control problem, now we use that to solve problem ~\ref{prob:opt_training_det}. Using Pontryagin's Maximum Principle, the control Hamiltonian $H: T^{\ast}Q\times \mathcal{U} \to \mathbb{R}$ is 
\begin{align}\label{eq:opt_control_Ham_resnet}
H(q_0, \dots, q_N, p_0, \dots, p_N, \theta) = \sum\limits_{i=0}^{N}\left\langle p_i, \sigma(q_i, \theta) \right\rangle - \frac{\gamma}{2} \| \theta \|^2
\end{align}
where $\gamma \in \mathbb{R}_{\geq 0}$ and the last term is the regularisation term. We also require that
\begin{align*}
\frac{\delta H}{\delta \theta} = \frac{\delta H}{\delta (u, b)}= 0,
\end{align*}
that is
\begin{align*}
\frac{1}{\gamma}\sum\limits_{i=0}^{N} p_i\frac{\partial \sigma}{\partial z}q_i^T - u &= 0 \\
 \frac{1}{\gamma}\sum\limits_{i=0}^{N} p_i\frac{\partial \sigma}{\partial z} - b &= 0
\end{align*}
and substitute the expression for $\theta$ into $H$, we obtain a Hamiltonian in terms of canonical coordinates $(q,p)$ on cotangent bundle $T^{\ast}Q$. Then the Hamiltonian,  $H: T^{\ast}Q \to \mathbb{R}$, becomes a function defined on the cotangent space rather than the Pontryagin's bundle and it is given by
\begin{equation}\label{eq:opt_control_Ham_resnet_cotq}
\begin{aligned}
H(q_0, \dots, q_N, p_0, \dots, p_N, t)  &= \sum\limits_{i=0}^{N}\left\langle p_i,  \sigma \left(q_i,  \left( \frac{1}{\gamma}\sum\limits_{j=0}^{N} p_j\frac{\partial \sigma}{\partial z}q_j^T, \frac{1}{\gamma}\sum\limits_{i=0}^{N} p_i\frac{\partial \sigma}{\partial z}   \right) \right) \right\rangle \\
&- \frac{1}{2\gamma} \left\| \sum\limits_{j=0}^{N} p_j\frac{\partial \sigma}{\partial z}q_j^T \right\|^2 - \frac{1}{2 \gamma} \left\| \sum\limits_{j=0}^{N} p_j\frac{\partial \sigma}{\partial z} \right\|^2
\end{aligned}
\end{equation}
\begin{figure}[h]
    \centering
         \begin{subfigure}[b]{0.47\textwidth}
         \centering
         \includegraphics[width=\textwidth]{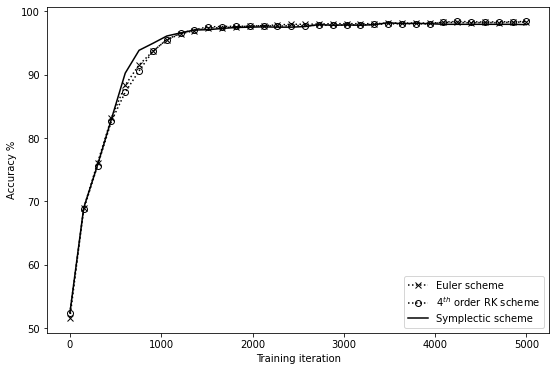}
         \caption{ }
         \label{fig:accuracy}
     \end{subfigure}
          \begin{subfigure}[b]{0.47\textwidth}
         \centering
         \includegraphics[width=\textwidth]{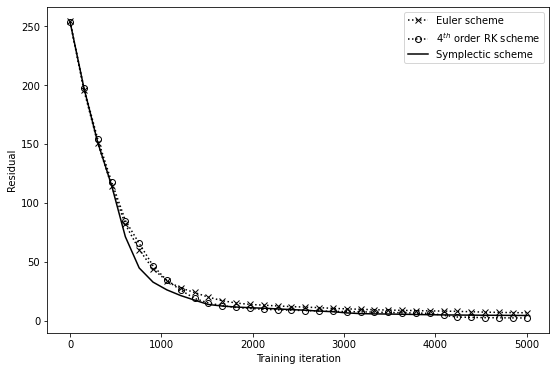}
         \caption{ }
         \label{fig:residual}
     \end{subfigure}
\hfill         
\begin{subfigure}[b]{0.47\textwidth}
         \centering
         \includegraphics[width=\textwidth]{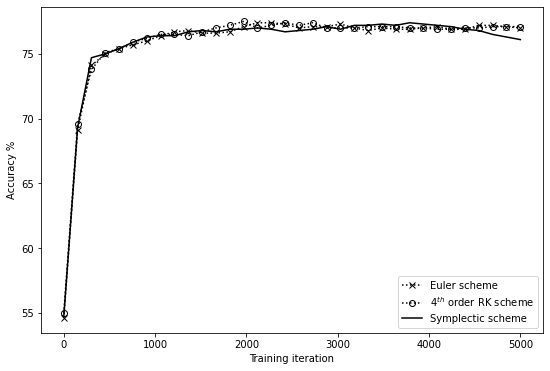}
         \caption{ }
         \label{fig:accuracy_d}
     \end{subfigure}
          \begin{subfigure}[b]{0.47\textwidth}
         \centering
         \includegraphics[width=\textwidth]{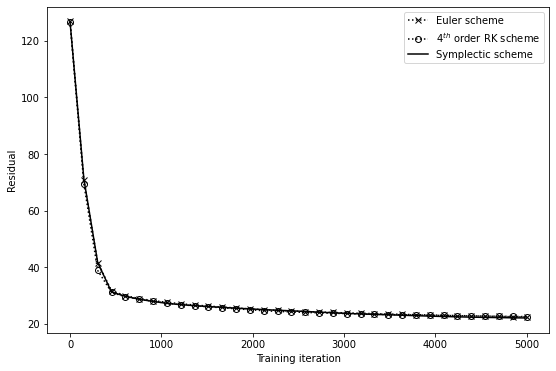}
         \caption{ }
         \label{fig:residual_d}
     \end{subfigure}
        \caption{The accuracy of the neural networks predictions is shown in figures ~\ref{fig:accuracy} and ~\ref{fig:accuracy_d}, for the two spirals and two circles datasets respectively, over $5000$ training iterations. On the other hand, the residual, computed by $\sum\limits_{i=0}^N \| \pi(q^(50)_i) - c^i \|^2$, is shown in figures ~\ref{fig:residual} and ~\ref{fig:residual_d}, for the two spirals and two circles datasets respectively.}
        ~\label{fig:plots_acc_residual}
\end{figure}
\begin{figure}[h!]
    \centering
         \begin{subfigure}[b]{0.95\textwidth}
         \centering
         \includegraphics[width=\textwidth]{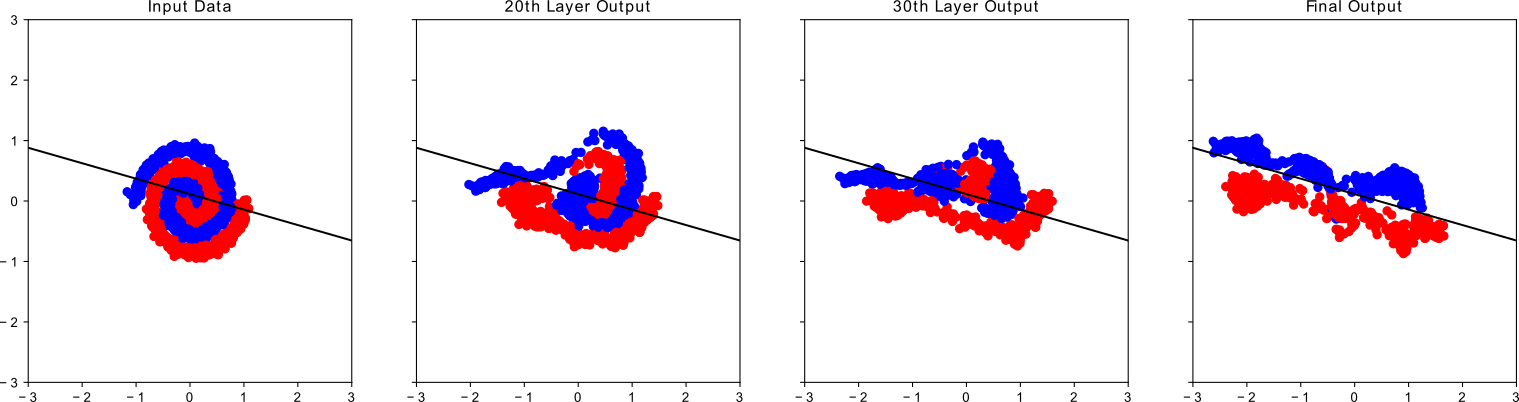}
         \caption{ }
         \label{fig:euler_snap_all}
     \end{subfigure}
     \hfill
          \begin{subfigure}[b]{0.95\textwidth}
         \centering
         \includegraphics[width=\textwidth]{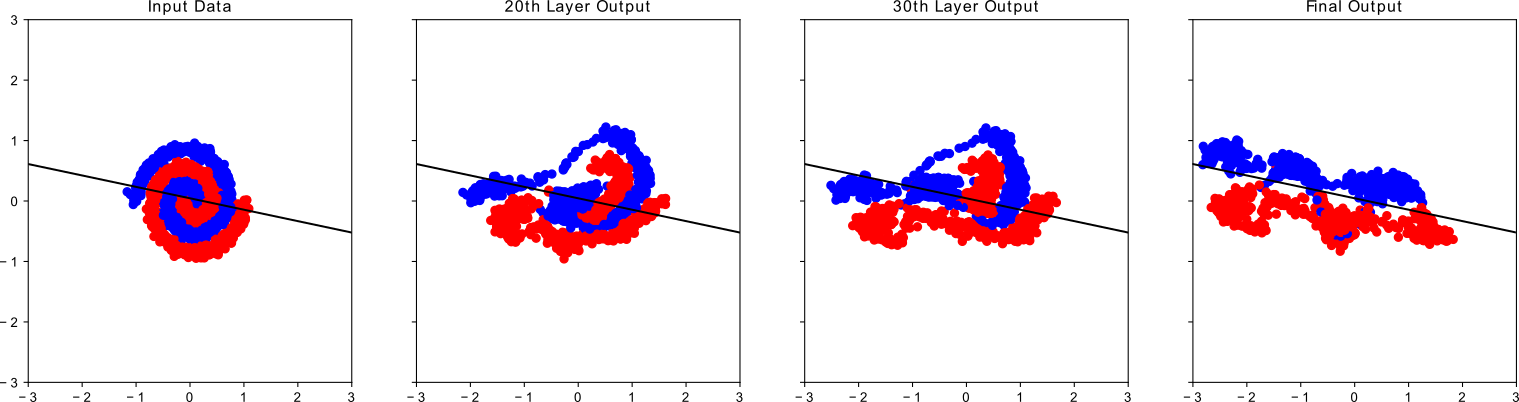}
         \caption{ }
         \label{fig:kutta_snap_all}
     \end{subfigure}
 \hfill     
 \begin{subfigure}[b]{0.95\textwidth}
         \centering
         \includegraphics[width=\textwidth]{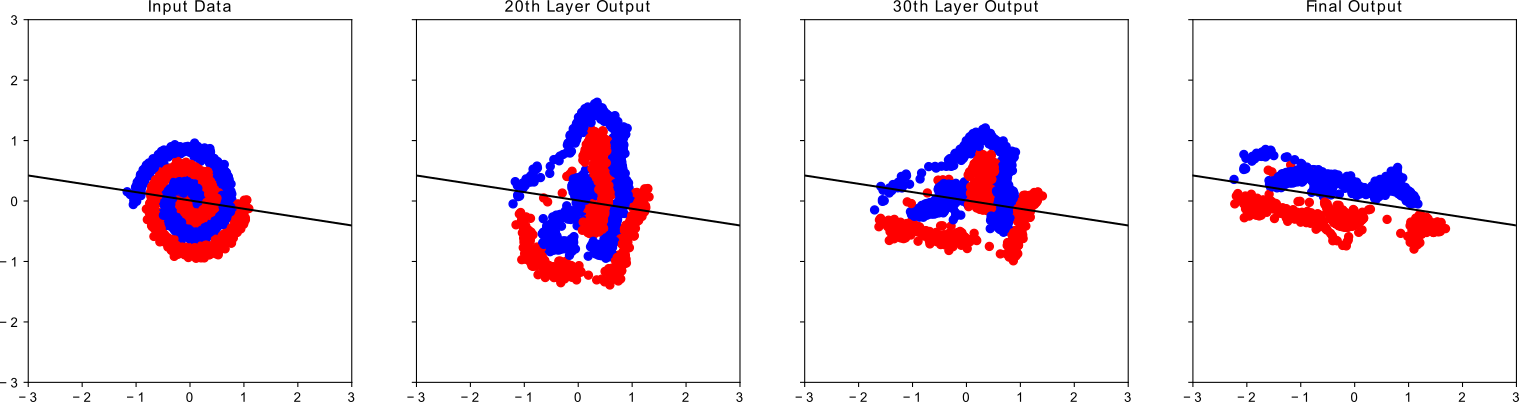}
         \caption{ }
         \label{fig:symplectic_snap_all}
     \end{subfigure}
        \caption{The transformation of two spirals training set $ \{(q_0^{(i)}, c^{(i)}_{N_t}) \}_{i = 0, \dots, N}$ with $N = 2000$ at the $20^{th}$ layer (the second column from the left), $30^{th}$ layer (the third column from the left), and the final layer (rightmost column). Figure \ref{fig:euler_snap_all}, shows the network discretised using Euler scheme, while figure ~\ref{fig:kutta_snap_all}, shows the results obtained using $4^{th}$ order Runge--Kutta method . Finally,  figure ~\ref{fig:symplectic_snap_all} is when the symplectic method using Euler scheme for the generating function. The solid black line separates between the two classes/categories. }
        ~\label{fig:unfold}
\end{figure}
\begin{figure}[h!]
    \centering
         \begin{subfigure}[b]{0.49\textwidth}
         \centering
         \includegraphics[width=\textwidth]{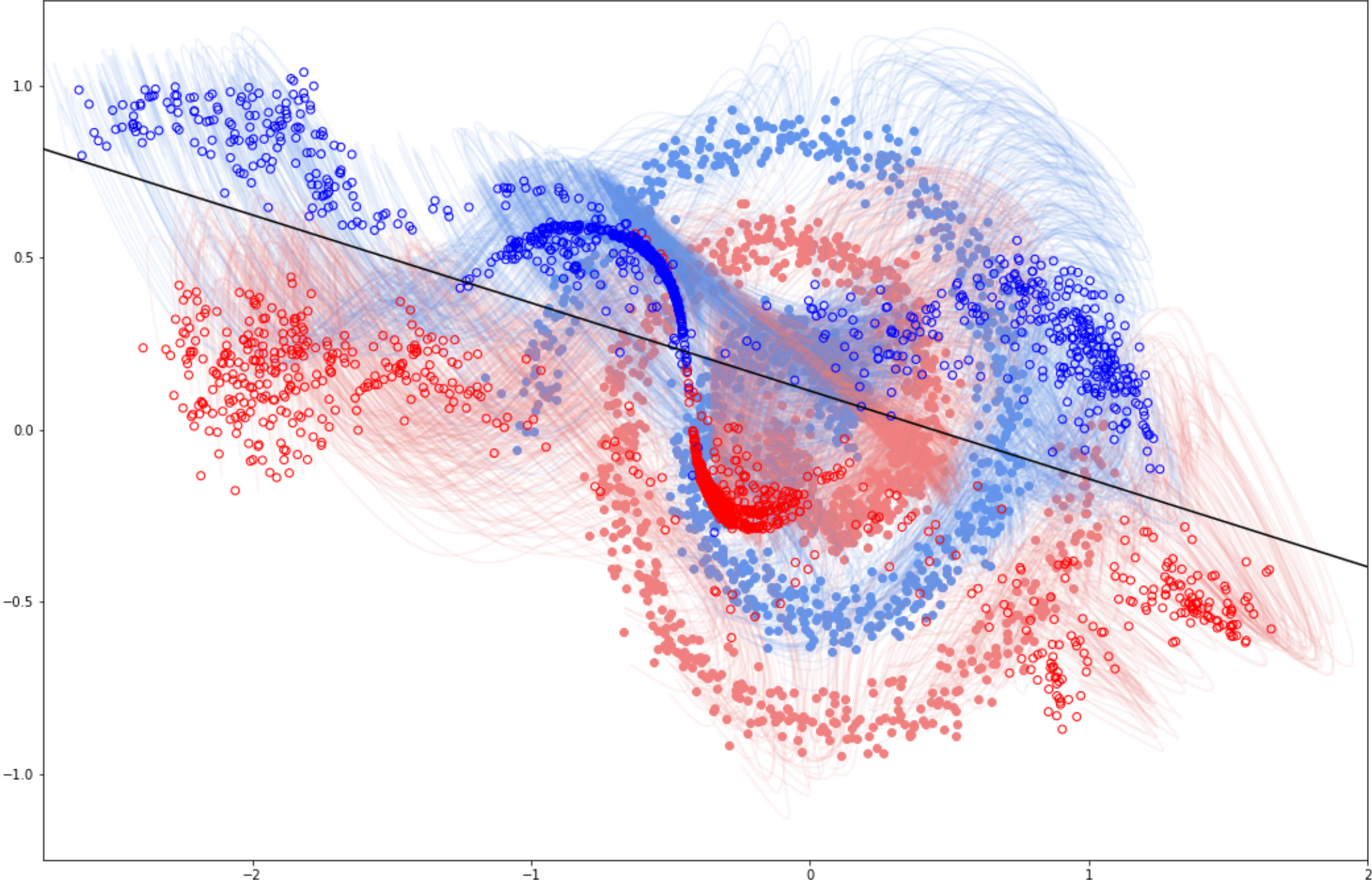}
         \caption{ }
         \label{fig:euler_paths}
     \end{subfigure}
     \hfill
          \begin{subfigure}[b]{0.49\textwidth}
         \centering
         \includegraphics[width=\textwidth]{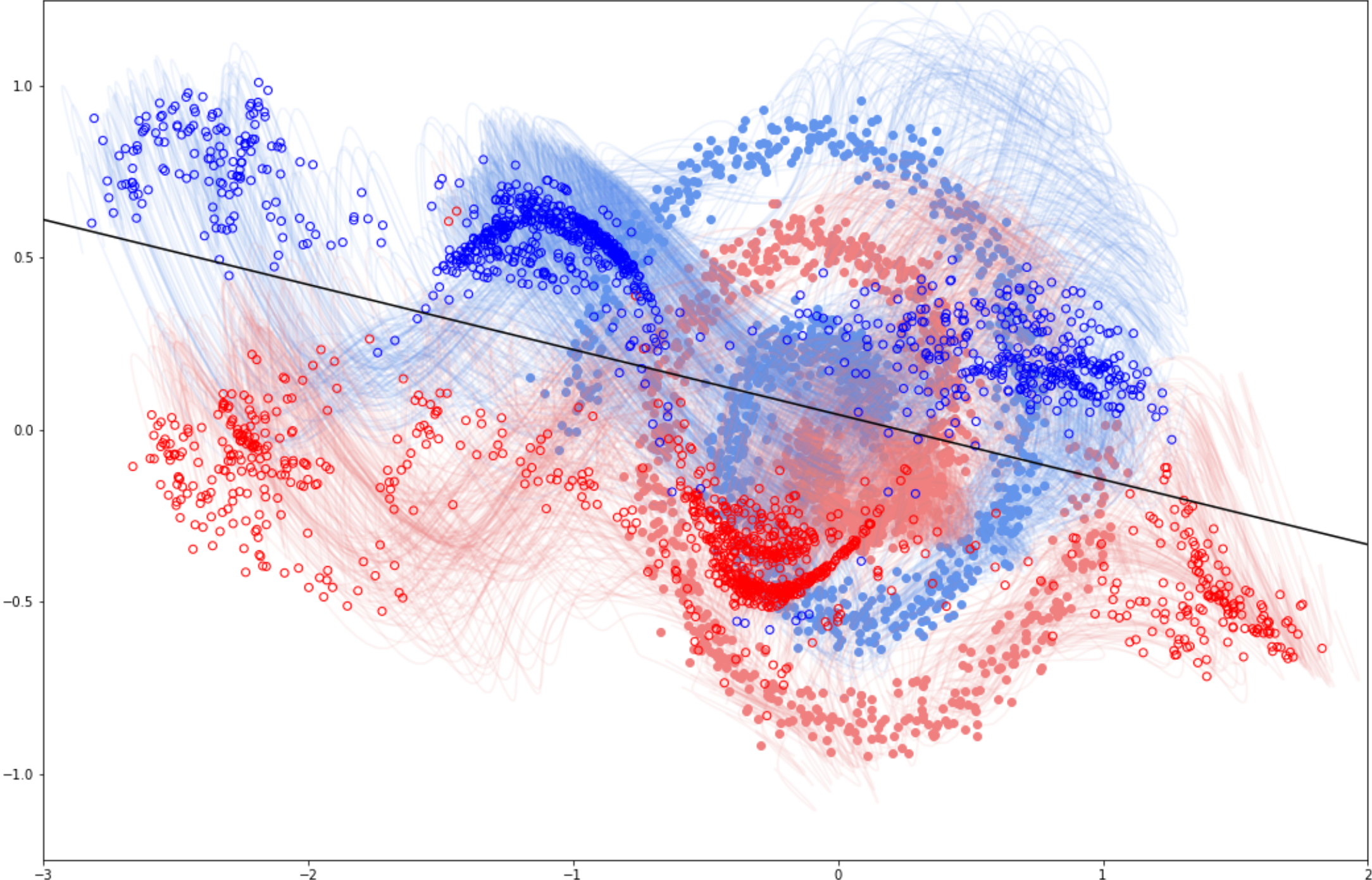}
         \caption{ }
         \label{fig:kutta_paths}
     \end{subfigure}
 \hfill     
 \begin{subfigure}[b]{0.49\textwidth}
         \centering
         \includegraphics[width=\textwidth]{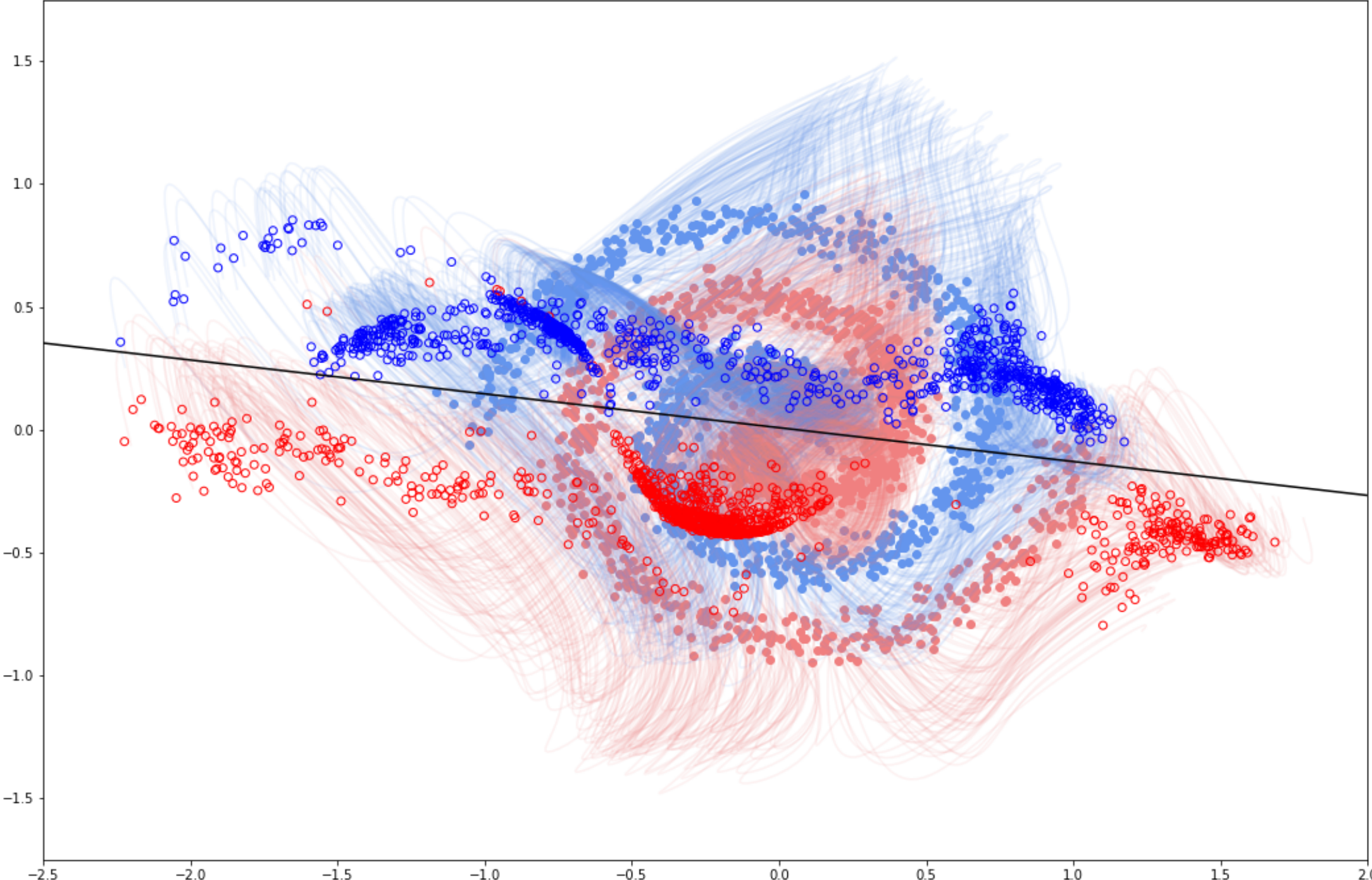}
         \caption{ }
         \label{fig:symplectic_paths}
     \end{subfigure}
        \caption{The trajectories of the two spirals data are plotted, where the solid dots are the initial data and the hollow circles are the final data. The light coloured lines are the individual paths each data point followed when iterated through the discretised continuous residual network. Figure ~\ref{fig:euler_paths} shows the paths when the network is discretised using Euler scheme, while figure ~\ref{fig:kutta_paths} and ~\ref{fig:symplectic_paths} show the trajectories of the $4^{th}$ order Runge--Kutta and symplectic integrator, respectively.  }
        \label{fig:trajectories}
\end{figure}
\begin{problem}\label{prob:opt_training_discrete}
Given the training set  $ \{(q_0^{(i)}, c^{(i)}_{N_t}) \}_{i = 0, \dots, N}$, compute the sequence $(q^k, p^k)$ such that it satisfies
\begin{equation}\label{eq:symplectic_resNet}
\begin{aligned}
q_i^{k+1} &= q_i^k + \Delta t \sigma \left( \frac{1}{\gamma} \sum\limits_{j=0}^{N} \frac{\partial \sigma}{\partial z} p_j^{k+1}  (q_j^k)^Tq_i^k +\frac{1}{\gamma}\sum\limits_{j=0}^{N} \frac{\partial \sigma}{\partial z} p_j^{k+1}  \right), \\
 p_i^{k+1} &= p_i^n - \frac{\Delta t}{\gamma} \frac{\partial \sigma}{\partial z} p_i^{k+1} \left( \sum\limits_{j=0}^{N} \frac{\partial \sigma}{\partial z} p_j^{k+1}  (q_j^k)^T \right)
\end{aligned}
\end{equation}
for $k=0, \dots, N_l-1$ and the boundary conditions $q^0 = q_0^{(i)}$ and $\pi(q^{N_t}) = c^{(i)}$ for each $i = 0, \dots, N$. 
\end{problem}
In order to iterate through \eqref{eq:symplectic_resNet}, we require $p^0$, however at this point, it is not available. Further, the value of $p^0$ must ensure that $q^0 = q_0^{(i)}$ and $\pi(q^{N_t}) = c^{(i)}$. There are a number of ways in which one can choose a suitable initial value for the costate equation and one of them is shooting method \cite{kraft_converting_1985}. The idea is to pick an arbitrary $p^0$ and then evaluate how different $\pi(q^{N_t})$ from   $c^{(i)}$ and use this error to modify $p^0$. The process is then is iterated to refine the choice of $p^0$. An alternative approach is to use the fact that we can derive the $p^{N_l}$ from the cost functional. Once we have that, we can use that to define \eqref{eq:symplectic_resNet} as the  Karush--Kuhn--Tucker condition for nonlinear programming. For such constraint optimisation problems, iterative methods such as sequential quadratic programming \cite{han_superlinearly_1976}, or even using Newton's iterative method. 
\\

To evaluate the numerical scheme presented in this section, we train the network for two separate classification problems: two spirals, and two concentric circles classification. That said, this is just a toy example as a sanity check. Both datasets contain data from only two classes, that means for all classification space $\mathcal{C} = \{0, 1\}$. The two spirals dataset contains $2000$ data points for training and $2000$ other data points for testing. Whereas, the two concentric circles dataset uses, for each training and testing, $1000$ data points.  The number of layers used for both datasets is $N_t = 50$ and $\Delta t = 0.075$. The training was done over $5000$ iterations. 
\\

In order to put the structure--preserving scheme into perspective, other numerical schemes for ordinary differential equations are used for neural networks: Euler method, and fourth order explicit Runge--Kutta method. The methods are compared using the error function in \eqref{eq:control_functional_det}, $L = \sum\limits_{i=0}^N \| \pi((q(50))^{(i)}) - c^{(i)} \|^2$,  and the accuracy at each training steps. The accuracy and residuals for the two datasets are shown in figure ~\ref{fig:plots_acc_residual}. Figure ~\ref{fig:unfold} shows snap shots of the data, only for the two spirals dataset, propagating through the layers of each discretisation of the continuous ResNet. At the final layer, we see that, for all implementations, the majority of data points of each class are carried in a way they become sorted. Figure ~\ref{fig:trajectories}, shows the trajectories that each data point takes for all three discretisations.

\subsection{Deep Neural Networks as Discrete Lie-Poisson System}\label{sec:Discrete_LP}

Before discretising \eqref{eq:Pi_0_mfg}, we first need to review a particular discretisation compatible with the Lie--Poisson Hamilton--Jacobi theory. The discretisation is done on the algebra of pseudodifferential symbols $\Psi \mathrm{DS}$ instead of discretising the Hamilton--Jacobi equation. The philosophy here is similar to \cite{mullen2009energy, pavlov2011structure}, where the volume preserving group of diffeomorphisms is discretised and approximated as a subgroup of $GL(N)$ Lie group instead. %{\color{red} For $\Psi \mathrm{DS}$ the symbols are defined using a composition of the Fourier transform and its inverse.}
\\

Let $Q$ denote a smooth manifold, its discrete analogue is denoted by
\begin{align*}
\mathsf{Q} := \left\{ (x_1^i, x_2^i, \dots, x_N^i) \, : \, i=0, \dots, N_t, \quad (x_1^i, x_2^i, \dots, x_N^i) \in Q \right\},
\end{align*}
and in this discretisation, the space between each point in each dimension is a constant $\mathsf{D}_x$. Consider the pseudodifferential operator \eqref{eq:pso_operator}, when applied to a function $u  \in C^{\alpha}\left( Q \right)$, and instead of evaluating the Fourier transform on $Q$, it is evaluated on $\mathsf{Q}$. With out the loss of generality, in $1$ dimensions, the pseudodifferential operator on $u$ can be written as 
\begin{align}\label{eq:discretepsudo}
P_d u(x_j) = \frac{1}{(2 \pi)^n}\sum\limits_{j,k}\int e^{i(x_k- x_j) \xi} a_k(x_j) \xi^k u_j d\xi.
\end{align}
From the previous expression, the following polynomial
\begin{align}
p(x_j, \xi) = \sum\limits_{k \leq m} a_{k,j} \xi^k,
\end{align}
where $ a_{k,j} = a_k(x_j)$, is called \emph{discrete symbol of order $\mathbf{m}$}. We denote the algebra of pseudodifferential symbols of order $m$ on $\mathsf{Q}$ by $\Psi \mathrm{DS}_{d, m}$. Basic operations and definitions carry also to pseudodifferential operators on $\mathsf{Q}$. 
\begin{definition}\label{def:mul_discrete}
Let $A \in \Psi \mathrm{DS}_{d, m}$ and $B \in \Psi \mathrm{DS}_{d, n}$, then the multiplication of $A \circ B$ on $\mathsf{Q}$ is the operator $C$ with symbol of order $m+n$ defined by
\begin{align}\label{eq:mul_discrete}
    c_j(\xi) =& \sum\limits_{|\alpha| < N} \frac{1}{\alpha!} \partial^{\alpha}_{\xi}a_j(\xi)\mathsf{D}_x^{\alpha}{b}_j( \xi)
\end{align}
where $\mathsf{D}_x^{\alpha}$ is the discrete difference operator in the $x$ direction of order $\alpha$.
\end{definition}
\begin{definition}
Let $A \in \Psi \mathrm{DS}_{d, m}$ and $B \in \Psi \mathrm{DS}_{d, n}$, the commutator bracket for the $\Psi \mathrm{DS}_{d}$ is
\begin{align}
    \left[ A, B \right](x_j) = \sum_{n \geq 1} \frac{1}{\alpha!} \partial^{\alpha}_{\xi}a_j(\xi)\mathsf{D}_x^{\alpha}{b}_j( \xi) - \mathsf{D}_x^{\alpha}{a}_j( \xi) \partial^{\alpha}_{\xi}b_j(\xi)
\end{align}
\end{definition}
Another important map that is used extensively is the trace map $\mathrm{Tr}: \Psi \mathrm{DS}_d \to \mathbb{R}$, and here for  $A \in \Psi \mathrm{DS}_{d, m}$, it is defined by
\begin{align}
    \mathrm{Tr}\left( A \right) = \sum\limits_{ j} a_{-1,j}.
\end{align}
Having defined the trace mapping, this enable us to define the pairing between the algebra $\Psi \mathrm{DS}_{d}$ and its dual $\Psi \mathrm{DS}_{d}^{\ast}$ defined as a map  $\langle \cdot, \cdot \rangle : \Psi \mathrm{DS}_{d} \times \Psi \mathrm{DS}^{\ast}_{d} \to \mathbb{R}$ described by
\begin{align} \label{eq:pairing_discrete}
    \left\langle A, B \right\rangle = \mathrm{Tr}(A \circ B),
\end{align}
where $A \in \Psi \mathrm{DS}_{d}$, and  $B \in \Psi \mathrm{DS}^{\ast}_{d}$.
\begin{remark}
This is useful when the the space $\Psi \mathrm{DS}_d$ is decomposed into two subspaces as in \eqref{eq:pso_decompose} to help identify the the Lie algebra with its dual.
\end{remark}

To directly apply Lie--Poisson Hamilton--Jacobi theory of proposition \ref{prop:SemiLPHJ}, we first need to give the explicit expression for the group action and its cotangent lift on pseudodifferential algebra on $\mathsf{Q}$.  As we are only interested in a subalgebra of  $\Psi \mathrm{DS}_d$, we only state the algebraic expressions of the subalgebra of interest. From this point onwards, we use the notation $\mathfrak{g}_{\mathrm{INT}}(\mathsf{Q})$ to denote the algebra of pseudodifferential integral operators on the discrete set $\mathsf{Q}$ and $\mathfrak{g}_{\mathrm{INT}}^{\ast}(\mathsf{Q})$ its dual. Analogous to the continuous case, the Lie algebra $\mathfrak{g}_{\mathrm{INT}}(\mathsf{Q})$ has an associated Lie group $G_{\mathrm{INT}}(\mathsf{Q})$ and elements of this Lie group, $g_j(s)$, satisfy the following differential equation
\begin{align*}
\frac{d}{ds} g_j(s) = A_j \circ g_j(s),
\end{align*}
where $A_j \in\mathfrak{g}_{\mathrm{INT}}(\mathsf{Q})$ and $g_j(0) = \mathrm{Id}$. The subscript $j$ denotes that action is evaluated at $x_j \in \mathsf{Q}$. 
\\

Computing the tangent and cotangent lifts the actions of $G_{\mathrm{INT}}(\mathsf{Q})$ on  $\mathfrak{g}_{\mathrm{INT}}(\mathsf{Q})$ and $\mathfrak{g}^{\ast}_{\mathrm{INT}}(\mathsf{Q})$ for all orders is beyond the scope of this project. However, we only consider the order $-1$  of pseudodifferential symbols, because it is the symbol that would yield the desired Lie--Poisson bracket \eqref{eq:Lie_Poisson_bracket_explict}.
\begin{definition}
Consider the discrete symbol $P_j = p_{1,j} \xi^{-1} +  p_{2,j} \xi^{-2} +  p_{3,j} \xi^{-3} \in \mathfrak{g}_{\mathrm{INT}}(\mathsf{Q})$ and the group action $g_j(s) =  1 + u_{1,j}(s) \xi^{-1} +u_{2,j}(s) \xi^{-2}  +u_{3,j}(s) \xi^{-3} $ and its tangent lift of the left action on $P_j$ is
\begin{align*}
    TL_{g_j}P_j &=p_{1,j}\xi^{-1} + p_{2,j}\xi^{-2} + \left( u_{1,j}p_{1,j} + p_{3,j} \right) \xi^{-3} \\
    &+ \left( u_{1,j}p_{2,j} - 2u_{1,j}\mathsf{D}_x p_{1,j} + u_{2,j}p_{1,j} \right) \xi^{-4} + \mathcal{O}(\xi^{-5}),
\end{align*}
and the tangent lift of the right action on $P_j$ is
\begin{align*}
    TR_{g_j}P_j &= p_{1,j}\xi^{-1}+ p_{2,j}\xi^{-2} + \left(u_{1,j}p_{1,j}  + p_{3,j} \right)\xi^{-3} \\
    &+ \left(u_{1,j}p_{2,j} + u_{2,j}p_{1,j} - p_{1,j}\mathsf{D}_x u_{1,j} \right)\xi^{-4} + \mathcal{O}(\xi^{-5}) .
\end{align*}
\end{definition}
Meanwhile,  the cotangent lift of the action of $G_{\mathrm{INT}}(\mathsf{Q})$ on  $\mathfrak{g}^{\ast}_{\mathrm{INT}}(\mathsf{Q})$ are computed according to \eqref{eq:left_cotangent_lift} and \eqref{eq:left_cotangent_lift}. 
\begin{definition}
Consider the discrete symbol $\alpha_j =  \alpha_{1,j} \xi + \alpha_{2,j} \xi^2$  on the dual space of $\mathfrak{g}^{\ast}_{\mathrm{INT}}(\mathsf{Q})$, then the cotangent lift of the left group action $g_j(s) =  1 + u_{1,j}(s) \xi^{-1} +u_{2,j}(s) \xi^{-2} $ on $\alpha_j$ is
\begin{align*}
    TL^{\ast}_{g_j}\alpha_j &=  \alpha_{2,j} \xi^2 +\left( u_{1,j}\alpha_{2,j} + \alpha_{1,j}  \right)\xi + u_{1,j}\alpha_{1,j} + u_{2,j}\alpha_{2,j} + 2\alpha_{2,j}\mathsf{D}_x u_{1,j}  \\
    &+ \left( u_{2,j}\alpha_{1,j}  + \alpha_{1,j}\mathsf{D}_x u_{1,j} + 1.0\alpha_{2,j}\mathsf{D}_x^2 u_{1,j}   + 2\alpha_{2,j}\mathsf{D}_x u_{2,j} \right)\xi^{-1} \\
    &+  \left( \alpha_{1,j}\mathsf{D}_x u_{2,j}  + 1.0\alpha_{2,j}\mathsf{D}_x^2 u_{2,j}  \right) \xi^{-2} +  \mathcal{O}(\xi^{-3}),
\end{align*}
in parallel, the cotangent lift of the right action is
\begin{align*}
    TR^{\ast}_{g_j }\alpha_j &= \alpha_{2,j} \xi^2 + \left( u_{1,j}\alpha_{2,j} + \alpha_{1,j}  \right) \xi+ u_{1,j}\alpha_{1,j} - u_{1,j}\mathsf{D}_x \alpha_{2,j} + u_{2,j}\alpha_{2,j}  \\
    &+ \left( - u_{1,j}\mathsf{D}_x \alpha_{1,j}  + 1.0u_{1,j}\mathsf{D}_x^2\alpha_{2,j}   + u_{2,j}\alpha_{1,j} - 2u_{2,j}\mathsf{D}_x \alpha_{2,j}  \right) \xi^{-1} \\
    &+  \left( u_{1,j}\mathsf{D}_x^2\alpha_{1,j}   - 2u_{2,j}\mathsf{D}_x \alpha_{1,j}   + 3.0u_{2,j}\mathsf{D}_x^2 \alpha_{2,j}  \right) \xi^{-2} + \mathcal{O}(\xi^{-3}).
\end{align*}
\end{definition}

In addition to the tangent and cotangent lifts, we need to define the discrete version of the representation of the action of $G_{\mathrm{INT}}(\mathsf{Q})$ on $\mathrm{Dens}(Q)$. One issue that arises is then in \eqref{eq:coadjoint_action_0}, the action is appears in Lagrangian coordinates, but the discrete setting is based on the Eulerian point of view. There are two ways we can resolve this issue: Using interpolation of the probability density function $\rho$ and numerically integrating the continuity equation in \eqref{eq:Lie_poisson_semi}. For the former, it is inspired by smoothed particle hydrodynamics \cite{monaghan1992smoothed}, where the elements of $\mathrm{Dens}(Q)$ are approximated by
\begin{align}
    \rho(x,t) \approx \sum\limits_j \rho_j(t) W_j(x).
\end{align}
Here, $W_j(x)$ are weights functions and one possible candidate of $W_j(x)$ is the Gaussian function
\begin{align*}
    W_j(x) = e^{- \alpha \| x - x_j \|^2}, \quad \text{with} \quad \alpha \in \mathbb{R}_{>0}.
\end{align*}
The second approach is to use a numerical scheme for approximating $\rho(x,t)$, where the \eqref{eq:Lie_poisson_semi} is solved in the interval $[t, t+ \Delta t]$.
\begin{definition}\label{def:pullback_rho_0}
Consider $g_j(s) \in G_{\mathrm{INT}}(\mathsf{Q})$ on $\mathrm{Dens}(\mathsf{Q})$, the function \eqref{eq:zeta}, the map $\zeta :G_{\mathrm{INT}}(\mathsf{Q})\to \mathrm{Dens}(\mathsf{Q})$ defined by
\begin{align} \label{eq:pullback_rho_0}
    \zeta(g_j) \rho = \rho_0(g_j \circ x)  = \sum\limits_j \rho_j(t) W_j\left(x + \int_0^t u_j \, dt\right).
\end{align}
On the other hand, it can be defined as one time iteration of the Fokker--Planck equation 
\begin{align}\label{eq:pullback_rho_1}
    \zeta(g_j) \rho =  \rho^{k} + \left( \Delta t \right) \mathsf{D}_x\left(\rho_{j}^ku_j \right).
\end{align}
\end{definition}
\begin{remark}
From a numerical stand point, using the pullback operator given in definition \eqref{eq:pullback_rho_0} is more stable and suitable to the continuity equation, however, the one given by definition \eqref{eq:pullback_rho_1} guarantees that we recover the continuity equation when taking $\rho^{k+1} - \rho^{k}$.
\end{remark}
The final term needed to explicitly apply the discrete Lie--Poisson Hamilton--Jacobi theory is the representation of the Lie group action on the dual of the Lie algebra.
\begin{definition}\label{def:pullback_rho_0_1}
Consider $g_j(s) \in \Psi G_d$ on $\mathrm{Dens}^{\ast}(\mathsf{Q})$, the discrete version of the map $(\zeta' )^{\ast} :\mathrm{Dens}^{\ast}(\mathsf{Q}) \to \mathfrak{g}_{\mathrm{INT}}(\mathsf{Q}) $ defined by
\begin{align} \label{eq:pullback_rho_0_1}
    (\zeta'_{f} )^{\ast} \rho = \rho^k_{j} \mathsf{D}_x f, 
\end{align}
for all $f \in \mathrm{Dens}^{\ast}(\mathsf{Q})$.
\end{definition}

\begin{proposition}\label{prop:numerical_LP}
The Hamiltonian system \eqref{eq:Lie_poisson_semi}, written in the Lie--Poisson Hamilton--Jacobi form \eqref{eq:Pi_0_mfg}, is solved numerically by
\begin{align}
    S_L &= S_0 + \Delta t H \left(  TL^{\ast}_{A}\mathbf{d}_{A}S_0 , \mathbf{d}_{\rho}S_0 \right) \\
    (M^{k}, \rho^{k}) &= \left(  \xi + u_j\xi^{-1} +  \mathsf{D}_x u_j \xi^{-2} + (\Delta t) \frac{\delta H}{\delta A} \xi + (\Delta t) u_j \frac{\delta H}{\delta A} \xi^{-1} \right. \nonumber \\
    &\left. + (\Delta t) \frac{\delta H}{\delta A} \mathsf{D}_x u_j \xi^{-2}   
 , \rho_{k} - \left( \Delta t \right) \mathsf{D}_x\left(\rho_{k,j}u_j\right) + (\Delta t)\frac{\delta H}{\delta \rho}  \right) \label{eq:rightMomentumMap_discrete} \\
    (M^{k+1}, \rho^{k+1}) &= \left(  \xi + u_j \xi^{-1} + (\Delta t) \frac{\delta H}{\delta A} \xi+ (\Delta t) u_j \frac{\delta H}{\delta A} \xi^{-1} - 2 (\Delta t) u_j \mathsf{D}_x \frac{\delta H}{\delta A} \xi^{-2} \right. \nonumber \\
    &\left.+ 3 (\Delta t) u_j \Delta^2_x \frac{\delta H}{\delta A} \xi^{-3}  + \rho_{k+1,j} \mathsf{D}_x f  , \rho + (\Delta t)\frac{\delta H}{\delta \rho} \right), \label{eq:leftMomentumMap_discrete}
\end{align}
with the initial condition $S_0 = \mathrm{Tr}(A) + \| \rho\|^2$.
\end{proposition}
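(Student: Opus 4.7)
The plan is to apply Theorem \ref{prop:SemiLPHJ} in the discrete setting of pseudodifferential symbols on $\mathsf{Q}$, combining the explicit discrete left and right cotangent lifts derived earlier in this subsection with a first-order Euler discretization in time of the Lie--Poisson Hamilton--Jacobi equation \eqref{eq:semiLPHJeqn}. Concretely, I would write
\begin{equation*}
S_L((A,\rho)) \approx S_0((A,\rho)) + \Delta t\, H\!\left(TR^{\ast}_A \mathbf{d}_A S_0 + (\zeta'_\rho)^{\ast}\mathbf{d}_\rho S_0,\; \mathbf{d}_\rho S_0\right),
\end{equation*}
and then insert this approximation into the two algebraic identities \eqref{eq:LPHJ_left_action} and \eqref{eq:LPHJ_right_action} that the generating function must satisfy, reading off $(\mu^k,\rho^k)$ and $(\mu^{k+1},\rho^{k+1})$ respectively.

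The choice $S_0 = \mathrm{res}(A) + \|\rho\|^2$ is enforced by the requirement that at $\Delta t = 0$ the resulting Poisson map reduces to the identity. Under the trace pairing \eqref{eq:pairing_discrete}, $\mathbf{d}_A \mathrm{res}(A)$ is the symbol $\xi$, which is the unit for the multiplication of definition \ref{def:mul_discrete} on the dual $\mathfrak{g}^{\ast}_{\mathrm{INT}}(\mathsf{Q})$, while $\mathbf{d}_\rho (\|\rho\|^2)$ collapses to $\rho$. With this choice the $\Delta t = 0$ versions of \eqref{eq:LPHJ_left_action}--\eqref{eq:LPHJ_right_action} give $\mu^k = \mu^{k+1}$ and $\rho^k = \rho^{k+1}$, as required.

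Next, for $(\mu^k,\rho^k)$, I would substitute the group element $g_j = 1 + u_j \xi^{-1}$ and the perturbed dual symbol $\alpha_j = \xi + \Delta t\,\frac{\delta H}{\delta A}\xi$ into the explicit formula for $TL^{\ast}_{g_j}\alpha_j$ listed earlier, retaining terms up to $\xi^{-2}$ to produce the $\mu^k$ entry of \eqref{eq:rightMomentumMap_discrete}. For the density slot, combining the discrete pullback $\zeta(g_j)\rho$ from \eqref{eq:pullback_rho_1} with the contribution $\Delta t\, \delta H/\delta\rho$ from differentiating the time-step term of $S_L$ in $\rho$ yields the stated $\rho^k$. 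The formula for $(\mu^{k+1},\rho^{k+1})$ follows by the analogous calculation with $TR^{\ast}_{g_j}\alpha_j$ in place of $TL^{\ast}_{g_j}\alpha_j$ and the additional term $(\zeta'_\rho)^{\ast}\mathbf{d}_\rho S_L = \rho^{k+1}_{,j}\mathsf{D}_x f$ coming from definition \ref{def:pullback_rho_0}, giving \eqref{eq:leftMomentumMap_discrete}; the density component $\rho^{k+1} = \rho + \Delta t\, \delta H/\delta\rho$ is then read directly off $\mathbf{d}_\rho S_L$.

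The hard part will be bookkeeping the asymptotic expansion in $\xi$ consistently. The composition rule of definition \ref{def:mul_discrete} couples every $\xi^{-k}$ coefficient, so one must decide where to truncate; the truncation order is dictated by restricting to the dual space of symbols of order two, which together with the $\mathcal{O}(\Delta t)$ scaling of the perturbation fixes the expansion depth needed to recover the Lie--Poisson bracket \eqref{eq:Lie_Poisson_bracket_explict} to leading order. A secondary subtlety concerns the choice of discrete pullback for the density: the form \eqref{eq:pullback_rho_1} is preferable here because $\rho^{k+1}-\rho^k$ then automatically produces a forward-Euler discretization of the continuity equation in \eqref{eq:Lie_poisson_semi}, ensuring consistency of the full scheme with the coadjoint dynamics of \eqref{eq:Lie_poisson_semi3}.
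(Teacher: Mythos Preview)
The paper does not supply a proof of Proposition~\ref{prop:numerical_LP}; the statement is given without argument, and the surrounding text immediately moves on to discuss open problems concerning inputs and target data. Your proposal therefore cannot be compared against a paper proof, but it does faithfully implement the recipe the paper sketches right after Theorem~\ref{prop:SemiLPHJ}: take the first-order-in-$t$ approximation $S_L \approx S_0 + \Delta t\, H(\cdots)$, choose $S_0$ so that $\Delta t = 0$ gives the identity Poisson map, and then feed the approximate $S_L$ into \eqref{eq:LPHJ_left_action}--\eqref{eq:LPHJ_right_action} using the explicit discrete cotangent lifts and the discrete $\zeta$, $(\zeta')^{\ast}$ of Section~\ref{sec:Discrete_LP}. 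In that sense your plan is exactly the approach the paper intends.

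One point worth flagging: the proposition as stated writes $TL^{\ast}_A$ inside the Hamiltonian argument of $S_L$, whereas the Lie--Poisson Hamilton--Jacobi equation \eqref{eq:semiLPHJeqn} and the template approximation displayed after Theorem~\ref{prop:SemiLPHJ} both use $TR^{\ast}_g$. You correctly follow the theorem and use $TR^{\ast}_A$ in your Euler step; the discrepancy appears to be a typographical slip in the proposition rather than a substantive difference, and your computation of $(\mu^k,\rho^k)$ via $TL^{\ast}_{g_j}$ and $(\mu^{k+1},\rho^{k+1})$ via $TR^{\ast}_{g_j}$ plus the diamond term is consistent with \eqref{eq:LPHJ_left_action}--\eqref{eq:LPHJ_right_action}. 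Your remarks on truncation order and on preferring the pullback form \eqref{eq:pullback_rho_1} also match the paper's own side comments.
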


One thing that needs to be pointed out,  that this formulation introduces a number of open problems regarding the inputs and target data. The use of probability density functions eliminates the role of inputs and targets and requires us to find suitable initial and final joint distributions in order to incorporate the training set in learning.  Nevertheless, one application which does not require picking suitable joint distributions is normalizing flow \cite{rezende2015variational}. In statistical inference, one requires to sample random data from various distributions and sometimes such distributions are complicated, thus renders standard techniques inefficient. In \cite{rezende2015variational}, it was proposed invertible maps, implemented using neural networks, to be applied successively as a way of mapping the target distribution to a simpler one, such as the normal Gaussian distribution. Once that is finished, sampling from the target distribution is done by simply sampling from the simple distribution and then map the sample points using the learned transformation to points that belong to the target distribution. The formulation of presented here lends itself to such application. Indeed, in \cite{G2021}, a stochastic ResNet whose parameters satisfy \eqref{eq:Lie_poisson_semi3} was used to learn four different non-Gaussian distributions and the results are shown in figure ~\ref{fig:normalizing_flow}. Although, the numerical scheme used in \cite{G2021} uses a multisymplectic variational integrator.

\begin{figure}[h]
\centering
    \includegraphics[width=0.8\textwidth]{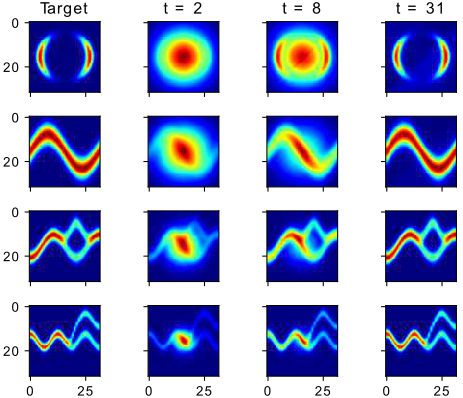}
    \caption{This numerical experiment is reproduction of the result in \cite{rezende2015variational}. A stochastic ResNet is used for a normalizing flow example. The idea is to find network parameters which would map a normal Gaussian distribution to a target distribution, that is not necessarily Gaussian.  Each row is a separate distribution and the left most column is the target distribution. The other columns are snapshots of the progression a normal distribution through the layers of the stochastic ResNet. A mean--field control is used to solve the stochastic optimal control problem used to train the network and the parameters evolve according to \eqref{eq:Lie_poisson_semi3}.  A multisymplectic numerical scheme for deep learning that solves \eqref{eq:Lie_poisson_semi3}, presented in \cite{G2021}, was used to obtain this reproduction. }
    \label{fig:normalizing_flow}
\end{figure}

\section{Connection to Integrable Partial Differential Equations}
In many cases, substituting the optimality condition $\partial_u H = 0$ into the Pontryagin's Hamiltonian yields,  an integrable system \cite{jurdjevic_integrable_2011, jurdjevic_2016}. This line of investigation is interesting for deep learning, as it can be beneficial for studying various behaviours. Apart from knowing the solution of the integrable systems, they do posses interesting properties, such as the bi-Hamiltonian structure. The existence of such structure means that the problem has another Hamiltonian function and according to Magri's lemma \cite{magri1978simple}, we can construct an infinite number of conservation laws by simply knowing the two Poisson structures. 
\\

Here, we use Lie--Poisson formulation to show that deep learning training, with certain conditions on the neural network's parameters, is equivalent to solving the nonlinear Schr\"{o}dinger equation. One very interesting fact regarding the Hamilton--Jacobi equation is that it works as a bridge from classical to quantum mechanics \cite{goldstein1980classical}. This is done by writing the generating function in terms of the wave function, $\psi$, i.e. 
\begin{align}\label{eq:HJ_to_Seqn}
\lambda (x,t) = \frac{1}{i}\mathrm{log} \psi(x,t),     
\end{align}
and substituting it into the Hamilton--Jacobi equation and the result is the Schr\"{o}dinger equation. For the system of equations \eqref{eq:opt_cond_mfg}, \eqref{eq:fokker_planck} and \eqref{eq:stoch_hjb}, instead of \eqref{eq:HJ_to_Seqn}, the following wave function is used
\begin{align}\label{eq:madelung_trans}
    \psi = \sqrt{\rho(x,t)} e^{i \lambda(x,t)}
\end{align}
where $\rho$ satisfy \eqref{eq:fokker_planck} and $\lambda$ satisfy \eqref{eq:stoch_hjb}, and this transformation is called \emph{Madelung transformation}  and it casts the nonlinear Schr\"{o}dinger equation as the \emph{quantum Euler equation} \cite{madelung_quantentheorie_1927}. This transformation has gained attention in recent years \cite{fusca_madelung_2017, khesin_geometric_2018, khesin_geometric_2020} as it defines a symplectic transformation from the Hilbert space of complex-valued wave functions $H^{\infty}(\mathbb{R}^n, \mathbb{C})$ to the dual of semidirect product Lie algebra $\mathfrak{X}^{\ast}(\mathbb{R}^n) \times H^{\infty}(\mathbb{R}^n)$.
\\

\begin{theorem}
Consider the family of Schr\"{o}diner equations 
\begin{align}\label{eq:schrodinger_eqn}
i \hbar \partial_t \psi + \frac{\hbar^2}{2m} \Delta \psi  + f(|\psi|^2) \psi,
\end{align}
where $\hbar$ is the Planck's constant, $m$ is the mass and $f: \mathbb{R}_{>0} \to \mathbb{R}$ and whose Hamiltonian on the Hilbert space for complex-valued wave functions $H_{NLS}: H^{\infty}(Q, \mathbb{C}) \to \mathbb{R}$ is given by
\begin{align} \label{eq:ham_schrodinger}
	H(\psi) =   \frac{\hbar}{2m} \int_{Q} \| \nabla \psi \|^2 \, dx+ \int_{Q} F(| \psi |^2) \, dx. 
\end{align}
With $f = 0$, it is equivalent to the training of deep residual neural networks 
\begin{align}
J(\theta) =\int_0^T \ell_d \left(q(t), \theta(t)\right) \, dt + \sum_{i=0}^N L( \pi((q(T)^{(i)}), c^{(i)} ),
\end{align}
constrained to 
\begin{align}\label{eq:sde_control_1}
    d_tq^{(i)}(t) = \theta(t) \circ q^{(i)}(t)  \, dt + \nu dW_t, \quad q^{(i)}(0) = q^{(i)}_0, \, \pi(q^{(i)}(T)) = c^{(i)},
\end{align}
with $i= 1, \dots, N$, where $( q^{(i)}_0, c^{(i)})$ form the dataset used in supervised training. The main assumption imposed here is that 
\begin{align}\label{eq:incompressible_condition}
    \mathrm{div}(\theta)  - \frac{\nu^2}{2}\mathrm{div}\left(\nabla \mathrm{log}(\rho)\right) = 0
\end{align}
\end{theorem}
\begin{proof}
The direct equivalence between the two problems is not direct, so the first step of the proof is to cast the training problem as a mean--field games problem. Instead of controlling \eqref{eq:sde_control_1}, the probability density function is controlled. The dynamical constraint for the probability density function is the Fokker--Planck equation
\begin{align}\label{eq:fokker_planck2}
    \partial_t \rho + \mathrm{div}\left( \rho \sigma(x, \theta)\right) - \frac{\nu^2}{2} \Delta \rho = 0,
\end{align}
with $\rho(x,0) = \rho_0$ and $\rho(x,T) = \rho_T$. Before solving the optimal control problem for the probability density, as mentioned in section \ref{sec:deep_poisson}, a change of variable is performed. The network parameters are decomposed into 
\begin{align}
    \theta(x, t)  = \omega(x, t) + \frac{\nu^2}{2} \nabla \mathrm{log}{\rho},
    \end{align}
and substituting it into \eqref{eq:fokker_planck2}, we obtain the continuity equation
\begin{align}
    \partial_t \rho + \mathrm{div} \left( \rho \omega(x, t) \right) = 0.
\end{align}
The Pontryagin's Hamiltonian then becomes
\begin{align*}
    H(\rho, \lambda) = \frac{1}{2} \int_{Q} \rho \| \omega \|^2 d\mathrm{Vol}(x) +\frac{\nu^2}{2}  \int_{Q}  \rho \left\langle \omega, \nabla \mathrm{log}(\rho) \right\rangle  d\mathrm{Vol}(x) + \frac{\nu^4}{8} \int_{Q} \rho \|  \nabla \mathrm{log}(\rho)\|^2 d\mathrm{Vol}(x)
\end{align*}
From the assumption \eqref{eq:incompressible_condition} we have the condition that $\mathrm{div} \omega = 0$  as a result, the Hamiltonian further simplifies to 
\begin{align} \label{eq:Ham_mfg_3}
    H(\rho, \lambda) = \frac{1}{2} \int_{Q} \rho \| \omega \|^2 d\mathrm{Vol}(x) + \frac{\nu^4}{8} \int_{Q} \rho \|  \nabla \mathrm{log}(\rho)\|^2 d\mathrm{Vol}(x).
\end{align}
The Hamiltonian function \eqref{eq:Ham_mfg_3} is obtained by substituting the Madelung transformation \eqref{eq:madelung_trans}, which is a mapping from $H^{\infty}(Q, \mathbb{C})$ to $\mathfrak{X}^{\ast}(Q) \times \mathrm{Dens}(Q)$, into the Hamiltonian, $H_{NLS}$, for the Schr\"{o}dinger equation \eqref{eq:schrodinger_eqn}, we have
\begin{equation}
\begin{aligned}
    H(\rho, \lambda)& =  \int_{Q} \frac{\hbar}{2 m } \left\langle \frac{\nabla \rho}{2 \sqrt{\rho}} e^{i\frac{\lambda}{\sqrt{\hbar}}} + \frac{i}{\sqrt{\hbar}}\nabla \lambda \sqrt{\rho} e^{i\frac{\lambda}{\sqrt{\hbar}}},\frac{\nabla \rho}{2 \sqrt{\rho}} e^{-i\frac{\lambda}{\sqrt{\hbar}}} - \frac{i}{\sqrt{\hbar}} \nabla \lambda \sqrt{\rho} e^{-i\frac{\lambda}{\sqrt{\hbar}}} \right\rangle \, dx \\
    &=  \int_{Q} \frac{\hbar}{8 } \frac{ \left\|\nabla \rho\right\|^2}{{\rho}} + \frac{1}{2}\rho \left\| \nabla \lambda \right\|^2 \, dx
\end{aligned}
\end{equation}
if $\hbar = \nu^4$ and using \eqref{eq:opt_cond_mfg}, i.e. $\nabla \lambda = \omega$, we obtain the Hamiltonian \eqref{eq:Ham_mfg_3}.
\end{proof}
\begin{corollary}
Consider the deep neural network training problem
\begin{align}
J(\theta) =\int_0^T \ell_d \left(q(t), \theta(t)\right) \, dt + \sum_{i=0}^N L( \pi((q(T)^{(i)}), c^{(i)} ),
\end{align} 
constrained to the ensemble of stochastic differential equations 
\begin{align}\label{eq:sde_control}
    d_tq^{(i)}(t) = \theta(t) \circ q^{(i)}(t)  \, dt + \nu dW_t, \quad q^{(i)}(0) = q^{(i)}_0, \, \pi(q^{(i)}(T)) = c^{(i)},
\end{align}
and the pair of initial and final conditions are the training dataset $(q^{(i)}_0, c^{(i)})$, where $i= 1, \dots, N$. The stochastic control problem is equivalent to the Lie--Poisson system with Hamiltonian $H: \Psi \mathrm{DS}(\mathbb{C})^{\ast} \to \mathbb{R}$ given by
\begin{align}
    H(L) = \frac{i}{3} \mathrm{Tr}(L^3),
\end{align}
where $L \in \Psi \mathrm{DS}(\mathbb{C})^{\ast}$ is defined as
\begin{align}
    L = \partial_x + \psi \partial_x^{-1} \psi^{\ast}.
\end{align}
\end{corollary}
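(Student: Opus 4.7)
The plan is to combine the previously proved theorem with the Sato-style presentation of the nonlinear Schr\"{o}dinger hierarchy on the algebra of pseudodifferential symbols. First I would invoke the theorem to replace the training problem with the free Schr\"{o}dinger equation (the case $f=0$) acting on $\psi \in H^{\infty}(Q,\mathbb{C})$. This reduction is purely an appeal to what has just been established, so no new analytic work is needed at this stage; the remainder of the proof is the identification of the Schr\"{o}dinger flow with a Lie--Poisson system on $\Psi\mathrm{DS}(\mathbb{C})^{\ast}$ generated by $H(L) = \tfrac{i}{3}\mathrm{Tr}(L^3)$.

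Next I would set up the Lax operator $L = \partial_x + \psi\,\partial_x^{-1}\,\psi^{\ast}$ and compute $L^2$ using the symbol multiplication rule from Definition \ref{def:mul_discrete} (applied in the continuous setting), obtaining $L^2 = \partial_x^2 + 2|\psi|^2 + \psi' \partial_x^{-1}\psi^{\ast} - \psi\,\partial_x^{-1}(\psi^{\ast})' + \psi\,\partial_x^{-1}|\psi|^2\partial_x^{-1}\psi^{\ast}$. The decomposition \eqref{eq:pso_decompose} gives the splitting $L^2 = (L^2)_{+} + (L^2)_{-}$ with $(L^2)_{+} = \partial_x^2 + 2|\psi|^2$. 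With this in hand I would show, using the Adler--Gelfand--Dickey trace identity $\delta H / \delta L = L^2$ for $H = \tfrac{i}{3}\mathrm{Tr}(L^3)$, that the Lie--Poisson equation obtained from \eqref{eq:lie_poisson_pso} takes the Lax form
\begin{align}
\partial_t L \;=\; \bigl[(L^2)_{+},\, L\bigr],
\end{align}
where the projection onto $\Psi\mathrm{DS}_{\mathrm{DO}}$ is exactly the one dictated by the dual pairing $\Psi\mathrm{DS}_{\mathrm{INT}}^{\ast} \simeq \Psi\mathrm{DS}_{\mathrm{DO}}$ introduced in Section \ref{sec:pseudospectral}.

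I would then expand both sides of the Lax equation in powers of $\xi^{-1}$ and match coefficients. The leading nontrivial coefficient gives
\begin{align}
\partial_t \psi \;=\; i\,\partial_x^2 \psi,
\end{align}
which is precisely the free Schr\"{o}dinger equation delivered by the theorem (up to the normalisation constants $\hbar$ and $m$ absorbed into the rescaling of $t$ and $\nu^4 = \hbar$). By the equivalence just established, the Lie--Poisson flow on $\Psi\mathrm{DS}(\mathbb{C})^{\ast}$ thus coincides with the mean--field optimality conditions of the training problem via the Madelung transformation \eqref{eq:madelung_trans}.

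The main obstacle I anticipate is the bookkeeping required to verify the functional derivative identity $\delta H/\delta L = L^2$ in the present pairing and to confirm that the residue terms cancel in the Lax commutator so that only the NLS-type update on $\psi$ survives; this is a nontrivial but routine calculation on the algebra of pseudodifferential symbols, and it is the only place where the precise conventions for the trace, the residue, and the $\diamond$-pairing from Section \ref{sec:lphj_theory} must be tracked with care. Once those coefficients are confirmed to vanish, the equivalence follows immediately by chaining the theorem with the Lax identification.
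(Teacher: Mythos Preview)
The paper offers no proof of this corollary; it is stated directly after the theorem and followed only by the sentence ``With the Hamiltonian defined on $\Psi\mathrm{DS}(\mathbb{C})^{\ast}$, \dots\ we can simply apply theorem~\ref{prop:SemiLPHJ}.'' Your overall strategy --- invoke the preceding theorem to pass to the Schr\"odinger equation, then identify that flow with the Lie--Poisson system on $\Psi\mathrm{DS}(\mathbb{C})^{\ast}$ via the Lax operator $L=\partial_x+\psi\,\partial_x^{-1}\psi^{\ast}$ --- is exactly the chain the paper implicitly relies on, and your computation of $(L^2)_{+}=\partial_x^2+2|\psi|^2$ and the Adler--Gelfand--Dickey identification $\delta H/\delta L=L^2$ are the right ingredients.

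There is, however, a genuine error in your coefficient-matching step. The Lax equation $\partial_t L=[(L^2)_{+},L]$ with $(L^2)_{+}=\partial_x^2+2|\psi|^2$ does \emph{not} yield the free Schr\"odinger equation: the commutator $[2|\psi|^2,\psi\,\partial_x^{-1}\psi^{\ast}]$ contributes at the same order as $[\partial_x^2,\psi\,\partial_x^{-1}\psi^{\ast}]$, and reading off the coefficient in front of $\partial_x^{-1}\psi^{\ast}$ one obtains the \emph{nonlinear} Schr\"odinger equation $i\partial_t\psi=-\partial_x^2\psi-2|\psi|^2\psi$ (up to sign conventions), not $\partial_t\psi=i\partial_x^2\psi$. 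Your assertion that ``the leading nontrivial coefficient gives $\partial_t\psi=i\partial_x^2\psi$'' is therefore incorrect. This mismatch --- the theorem delivers the $f=0$ case while the Lax operator in the corollary encodes the cubic NLS flow --- is already latent in the paper's own statement and is not resolved there either; but you should not resolve it by asserting a coefficient identity that fails. If you intend to close the argument, you must either (i) justify dropping the cubic term under the incompressibility assumption~\eqref{eq:incompressible_condition}, or (ii) acknowledge that the Lie--Poisson Hamiltonian in the corollary corresponds to the nonlinear member of the family~\eqref{eq:schrodinger_eqn} rather than the free one singled out in the theorem.
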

With the Hamiltonian defined on $\Psi \mathrm{DS}(\mathbb{C})^{\ast}$, the dual algebra of complex-valued pseudodifferential symbols, we can simply apply theorem \ref{prop:SemiLPHJ}.
\\

The he nonlinear Schr\"{o}dinger equation is not only integrable equation that arises from deep learning, in fact, when equation \eqref{eq:Lie_poisson_semi3}, when $\nu \to 0$, and the network's parameters regularising norm $\ell$ is chosen as $H^1$ Sobolev norm, the equation becomes
\begin{align*}
\partial_t m &=  \mathrm{ad}^{\ast}_{u} m, \\
m &= u - \alpha^2 \partial_x^2 u
\end{align*}
or when expressed in coordinates on $\mathbb{R}^N$, it becomes
\begin{equation}
\begin{aligned}
\partial_t m_i &+  \partial_{x^j}\left( m_i u^j \right) + \left(m_j \right)  \partial_{x^i}u^j  = 0 \\
m_i &= u_i - \alpha^2 \partial^2_{x^i}u_i,
\end{aligned}
\end{equation}
and it is the \emph{dispersionless Camassa--Holm equation}  \cite{camassa_integrable_1993} and it is an example of a Lie--Poisson system. An interesting property of this partial differential equation is that its weak solution is also a Hamiltonian system 
\begin{align}\label{eq:particle_solution}
H(q, p, t) &=  \sum\limits_{i=0, k=0}^{N_b, N_b} p^{(i)}p^{(k)}e^{- \frac{1}{2} \| q^{(i)} - q^{(k)} \|^2} 
\end{align}
The physics that are described by this Hamiltonian is an approximation to the N-particle solution, or \emph{peakon train}, of Euler--Poincar\'{e} equation with $H^1$ Sobolev norm \cite{kouranbaeva_camassaholm_1999, misio_lek_shallow_1998}.  What is meant by the peakon or peaked soliton is the a type of soliton, but with a discontinuity or a cusp at the crest of the wave. For the actual peakon train, the Gaussian in the Hamiltonian \eqref{eq:particle_solution}, is replaced by Green's function  $G(x,y) = e^{-|x - y|}x$ for shifted Poisson differential operator $I_d - \partial^2_{x}$.
\\

Further, the Hamilton's equation for the peakon train do have a Lax pair, and it is an example of an integrable system.
\begin{align}
    \frac{d}{dt}L = [L, P], 
\end{align}
where $L$ an $N\times N$ matrix, known as Lax matrix, with entries $L_{i,j} = \sqrt{p^{(i)}p^{(k)}}e^{-|q^{(i)} - q^{(k)}|}$ and $P$ is also an $N \times N$ matrix and its entries are $P_{i,j} = -2\sqrt{p^{(i)}p^{(k)}}\mathrm{sign}(q^{(i)} - q^{(k)})e^{-|q^{(i)} - q^{(k)}|}$. One advantage of knowing the Lax matrix is that taking trace of its power, i.e. $\mathrm{Tr}(L^k)$ for $k=2, \dots, N$, results in the constant of motion \cite{camassa_integrable_1993}. 
\\
\section{Conclusion \& Future Direction}
We have introduced, in this paper, an equivalent formulation of deep learning as a hydrodynamics system. This made possible by substituting the deep neural network by an ordinary differential equation and formulating the training of deep learning as an optimal control problem as it is equivalent to solving a Hamiltonian system.  First, it was the use of mean--field type control for the stochastic optimal control problem and that approach allows for the application of Pontryagin's Maximum Principle. The Hamilton's equations that are the optimality condition for deep learning, in this case, are partial differential equations, and instead of using them directly, we have used an equivalent system of equations that is similar to quantum fluid equation. Here we have demonstrated that this optimality condition is a Lie--Poisson system. Interestingly, the Poisson structure for deep learning coincides with the Poisson structure for the ideal compressible fluid. 
\\

What is interesting about the Lie--Poisson formulation is that it links deep learning with equations from fluid dynamics, in some specific cases, to the nonlinear Schr\"{o}dinger equation. Such connection have not been well studied, and it is a future research direction as the nonlinear Schr\"{o}dinger equation, in one-dimension at least, has a hierarchy of integrable equations. Potentially, this could help deriving conserved quantities for deep learning. The importance of conserved quantities is they allow us to better analyse the performance of the system and construct Lyapunov functions for studying the stability of the system; this approach forms the basis of Arnold's method for stability.   
\\

Despite the fact Lie--Poisson systems are Hamiltonian and with a Poisson structure, it is challenging to solve numerically and for this structure--preserving integrators are required. One type of structure--preserving integrators we have explored in this paper is integrators based on the Hamilton--Jacobi theory and its symmetry reduced version the Lie--Poisson Hamilton--Jacobi theory. The main reason for this choice is it can be used for both Hamiltonian systems on symplectic cotangent bundles and Poisson manifolds. Moreover, one of the advantages of this approach is that can be easily be made high-order without the need to impose additional constraints to enforce the conservation of the symplectic structure. This forms the basis of a training algorithm, and we have shown the algorithm behaves such similar methods using Euler method and $4^{th}$ order Runge--Kutta method.  
\\

The application is not straight forward and as a result there has been a number of modifications are needed. On top of that, the underlying dynamics are defined on infinite dimensional semidirect product space. For that reason, we have extended the Lie--Poisson Hamilton--Jacobi theory to accommodate semidirect product space and we have also formulated the hydrodynamics problem on the group of pseudodifferential symbols. The last step is required as it allows us to apply Lie--Poisson Hamilton--Jacobi theory directly to the quantum fluid equation for deep learning and derive structure--preserving numerical schemes. 
\\

\textbf{Stability of Deep Learning:} As mentioned, the Arnold's method of stability can be applied to the nonlinear Schr\"{o}dinger equation as a surrogate way to study the stability of deep learning, however, other methods that reply on the geometry of the problem can be used. One of such methods is the energy--Casimir method of \cite{HOLM19851, arnaudon2018stochastic}, where it is used to study the nonlinear stability of Lie--Poisson systems' relative equilibria and it is applied to various ideal compressible fluid and even plasma physics. The notion of nonlinear stability for a trajectory $(M(t), \rho(t)) \in \mathfrak{s}^{\ast}$ is that there exists $\epsilon > 0$ such that $\| (M_0, \rho_0) \| < \epsilon$, then $ \| ((M(t) - M_0, \rho(t) - \rho_0) \| < \delta$ for all of $t$, where $\delta > 0$. In other words, the trajectory, whose initial condition belongs to a specific set, would remain close to the set of initial values. The main ingredient is the Casimir function, which is a map $C: \mathfrak{s}^{\ast} \to \mathbb{R}$ such that it is the centre of the Lie algebra meaning for every function $F: \mathfrak{s}^{\ast} \to \mathbb{R}$, the Lie--Poisson bracket is equal to zero, i.e. $\{C, F \} = 0$. The Hamiltonian is then augmented with the Casimir function to form a new Hamiltonian $H_C = H + \Phi(C)$ and first variation of the Hamiltonian is evaluated at the equilibrium and then equated to zero. From there we find the function $\Phi$. The next step is to determine the convexity of the second variation of the Hamiltonian $H_C$. Once all criteria are met, one can determine the nonlinear stability of the system.
\\

The major challenge with the energy--Casimir method is that it requires a knowledge of the Casimir method, and this motivated the development of energy--momentum method of \cite{Simo1991StabilityOR}. Instead, the augmented Hamiltonian is $H_{\xi} = H + \left\langle \mathbf{J},  \xi \right\rangle$, where $\mathbf{J}: T^{\ast} Q \to \mathfrak{s}^{\ast}$ and $\xi \in \mathfrak{s}$. The Lie algebra element $\xi$ plays the role of the vector field associated with the relative equilibria and it renders the first variation $\delta H_{\xi} = 0$. Same as energy--Casimir method, the definiteness of $\delta^2 H_{\xi}$ indicates the stability of the system. 
\\

Another route for studying stability of Hamiltonian systems is to find the action-angle variables, denoted by $(I_1, \dots, I_n, \theta_1, \dots, \theta_n)$ In this coordinate system, the Hamiltonian $H$ depends only on the actions $I_1, \dots, I_n$ and the result of that is the configuration manifold is an n-torus $\mathbb{T}^n$. These variables are needed in order to study the perturbation of the Hamiltonian and we can assess the stability by KAM theory \cite{kolmogorov1954conservation, moser_invariant_1962, Arnold1963PROOFOA}. That said, KAM theory works on integrable systems and, at this point it is yet to be investigates, it is not known whether or not the deep learning system is integrable or not. What we know is that for a specific case, we can the Hamiltonian obtained from Pontryagin's Maximim Principle is integrable, which leads us to the next possible research direction. 
\\

\textbf{Generating Function For Optimal Control:} The issue with using structure--preserving integrators and gradient methods is that they require an initial guess for the costate and control parameters. Such guesses can affect the behaviour of the Hamiltonian system and convergence in general and in addition, iterative schemes for such problems can be time consuming and computationally taxing.  A different approach that is related to dynamic programming was proposed in \cite{guibout_formation_2002} and further investigated in \cite{park2003solutions,Guibout04thehamilton-jacobi, scheeres2003solving, guibout_solving_2004}. The key ingredient is to compute the generating function for symplectomorphism that maps between the initial and final conditions. This further makes it possible to find the solution of the costate in terms of training dataset, thus sidestepping the need to guess the initial/final condition for the costate. Such research direction is worth investigating for training of deep neural networks. 
\\

The idea is to treat initial and terminal conditions, $q(0) = q_0 \in Q$ and $q(T) = q_T \in Q$ respectively, as old and new coordinates, $(q_0, p_0)$ and $(q_T, p_T)$.  To see how generating functions and optimal transport are related, we start by looking at dynamic programming approach to optimal control. Given the cost functional 
\begin{align}
\mathbb{S}(q) = \int_0^T \ell(\theta) \, dt + \Phi(q(T), q_T),
\end{align}
we define the \emph{value function} as 
\begin{align}
V(q(t), t) = \min\limits_{\theta} \left\{ V(q(t + dt), t+dt) +  \int_t^{t+ dt} \ell(\theta) \, dt \right\},
\end{align}
and it satisfies the \emph{Hamilton--Jacobi--Bellman equation}, which can be regarded as a generalisation of the Hamilton--Jacobi equation. As shown in (Park), the value function is
\begin{align}\label{eq:value_func}
V(q(t), t) = -S(q(t), q_T,t) + \Phi(q(T), q_T)
\end{align}
which mean that it can be obtained by solving the Hamilton--Jacobi equation for $S$. That said, one small technical issue: it is not practical to use $S$ directly and that is because at $t=0$ it is hard to find initial conditions that satisfy the identity transformation. For this reason, we use type--II generating function $S_2$, or type--III generating function $S_3$ instead. Once $V$ is known, the parameters that would train the network are then given by
\begin{align}\label{eq:optimality_value_func}
\theta =  \arg\max\limits_{\theta^{\ast}} \left( H\left(q, \frac{\partial V}{\partial q}, \theta^{\ast}, t\right) \right)
\end{align}

That said, finding a solution for the generating function is challenging on its own right, however, in \cite{guibout_formation_2002} it was shown that it can be solved using power series expansion of generating function whose coefficients are obtained by solving a system of ordinary differential equations. The derivation of the system of ordinary differential equations is done by applying the Taylor expansion to the Hamiltonian and the generating function. Once that is done, both expansions are substituted into the Hamilton--Jacobi equation and as a result we obtain a power series whose coefficients are differential equations. Setting the coefficients of the combined series to be equal to zero, we obtain the system of equations whose solution is the coefficients of the generating function $S$. 
\\

The $N$ terms Taylor expansion of the Hamiltonian, around the null point of $T^{\ast}Q$, is given by
\begin{align*}
\widetilde{H}(q,p, t) = \sum_{i_{q_1}=0}^N \cdots \sum_{i_{q_1}=0}^N  \frac{(q_1)^{i_{q_1}}\cdots (q_n)^{i_{q_n}}(p_1)^{i_{p_1}}\cdots (p_n)^{i_{p_n}}}{{i_{q_1}}!\cdots i_{p_1}!}\,\left(\frac{\partial^{i_{q_1} + \cdots + i_{p_n}}}{\partial q_1^{i_{q_1}}\cdots \partial p_n^{i_{p_n}}}H\right)(0,\ldots,0,t),
\end{align*}
likewise, the generating function, is written as a polynomial with $N$ terms,
\begin{align*}
F(q,P, t) = \sum_{i_{q_1}=0}^N \cdots \sum_{i_{q_1}=0}^N  \frac{(q_1)^{i_{q_1}}\cdots (q_n)^{i_{q_n}}(p_1)^{i_{p_1}}\cdots (p_n)^{i_{p_n}}}{{i_{q_1}}!\cdots i_{p_1}!}\,F_{i_{q_1}, \ldots,i_{p_n} }(t),
\end{align*}
where $F_{i_{q_1}, \ldots,i_{p_n} }(t)\in \mathcal{C}(Q)$ are the coefficients of polynomial $F$ and they are differentiable functions on $Q$. Substituting the expansions $F(q,P,t)$ and $\widetilde{H}(q,p, t)$ into the Hamilton--Jacobi equation, we obtain a system of differential equations for the coefficients of $F(q,P, t)$. Once the solution is obtained, that yields the generating function $F(q,P, t) $, which is then used to obtain \eqref{eq:value_func}, which in turn allows us to obtain the weights of the network \eqref{eq:optimality_value_func}.
\\

This formulation is in the continuous-time space and for large systems, it is practical to use discrete time formulation. Mirroring the approach in the previous section, a discrete version of the generating function approach for solving two-boundary point problems could be beneficial. The Hamilton--Jacobi equation becomes a discrete one and it was introduced in \cite{Ohsawa2011DiscreteHT}. Given the set of discrete points of the generating function $\{ S^k \}_{k=1}^N$ which satisfy the discrete Hamilton--Jacobi equation 
\begin{align} \label{eq:discrete_HJ_eqn}
S^{k+1}(q_{k+1}) = S^{k}(q_{k+1})+ D_2S^{k+1}(q_{k+1}) \cdot q_{k+1} + H^+_d(q_{k}, D_2S^{k+1}(q_{k+1})), %+ h^+_{i,d}(q_{k}, D_2S^{k+1}(q_{k+1})) \Delta dW_t^i,
\end{align}
with the symplectic mapping $(q_k, p_k) \to (q_{k+1}, p_{k+1})$ defined by
\begin{align}
q_{k+1} = q_k + D_1S^{k+1}(q_{k+1}), \\
p_{k+1} = p_k - D_2S^{k+1}(q_{k+1}).
\end{align}
Once $\{ S^k \}_{k=1}^N$ are obtained, we can then use \eqref{eq:value_func} to obtain $V$ and from it $\theta$ of \eqref{eq:optimality_value_func}.
\bibliography{references}

%\bibliography{references}

\bibliographystyle{abbrv}

\end{document}